\newcommand{\addresseshere}{%
  \enddoc@text\let\enddoc@text\relax
}
\newcommand{\PP}{\mathrm{P}}
\newcommand{\B}{\mathbb{B}}
\newcommand{\Z}{\mathbb{Z}}
\newcommand{\N}{\mathbb{N}}
\newcommand{\R}{\mathbb{R}}
\newcommand{\fS}{\mathfrak{S}}
\newcommand{\Id}{\mathrm{Id}}
\newcommand{\cone}{\mathrm{cone}}
\newcommand{\conv}{\mathrm{conv}}
\newcommand{\height}{\mathrm{height}}
\newcommand{\im}{\mathrm{im}}
\newcommand{\HH}{\mathcal{H}}
\newcommand{\fpa}{\mathrm{FPA}}
\newcommand{\pn}{\mathrm{part}}
\newcommand{\rp}{\mathrm{relprime}}
\newcommand{\rpac}{\mathrm{rpac}}
\newcommand{\relprime}{\mathrm{relprime}}
\newcommand{\ptn}{\mathrm{part}}
\newcommand{\Rpac}{\mathrm{Rpac}}
\newcommand{\Relprime}{\mathrm{Relprime}}
\newcommand{\Ptn}{\mathrm{Part}}
\renewcommand{\phi}{\varphi}
\def\v{{\boldsymbol v}}
\def\t{{\boldsymbol t}}
\newcommand{\A}{\mathcal{V}}
\newcommand{\m}{\mathfrak{m}}
\newcommand\commentout[1]{}
\newtheorem{theorem}{Theorem}[section]
\newtheorem{problem}{Problem}[section]
\newtheorem{corollary}[theorem]{Corollary}
\newtheorem{proposition}[theorem]{Proposition}
\newtheorem{lemma}[theorem]{Lemma}
\theoremstyle{remark}
\newtheorem{example}[theorem]{Example}
\newtheorem{remark}[theorem]{Remark}
\theoremstyle{definition}
\newtheorem{definition}[theorem]{Definition}
\newtheorem{question}{Question}[section]
\newtheorem{setup}{Setup}[section]
\begin{document}

\title{Antichain Simplices}

\author{Benjamin Braun}
\address{Department of Mathematics\\
         University of Kentucky\\
         Lexington, KY 40506--0027}
\email{benjamin.braun@uky.edu}

\author{Brian Davis}
\address{Department of Mathematics\\
         University of Kentucky\\
         Lexington, KY 40506--0027}
\email{brian.davis@uky.edu}

\date{10 January 2019}



\maketitle

\begin{abstract}
  To each lattice simplex $\Delta$ we associate a poset encoding the additive structure of lattice points in the fundamental parallelepiped for $\Delta$.
  When this poset is an antichain, we say $\Delta$ is antichain.
  To each partition $\lambda$ of $n$, we associate a lattice simplex $\Delta_\lambda$ having one unimodular facet, and we investigate their associated posets.
  We give a number-theoretic characterization of the relations in these posets, as well as a simplified characterization in the case where each part of $\lambda$ is relatively prime to $n-1$.
  We use these characterizations to experimentally study $\Delta_\lambda$ for all partitions of $n$ with $n\leq 73$.
  Further, we experimentally study the prevalence of the antichain property among simplices with a restricted type of Hermite normal form, suggesting that the antichain property is common among simplices with this restriction.
  We also investigate the structure of these posets when $\lambda$ has only one or two distinct parts.
  Finally, we explain how this work relates to Poincar\'e series for the semigroup algebra associated to $\Delta$, and we prove that this series is rational when $\Delta$ is antichain.
\end{abstract}

\section{Introduction}\label{sec:intro}

Given a lattice simplex $\Delta$, the structure of the lattice points in the fundamental parallelepiped of the cone over $\Delta$ reflects a wealth of arithmetic and combinatorial properties of $\Delta$.
In this work, we study a partial order on these lattice points that encodes the additive relations among these points.
Thus, our main results are primarily arithmetic in nature, and will hopefully be of interest to those working in areas where fundamental parallelepipeds play a role, e.g. Ehrhart theory, partition identities, coding theory, optimization, etc.
Our motivation for this investigation comes from questions regarding rationality of Poincar\'e series for infinite graded resolutions of graded algebras, where it is of particular interest when the partial order associated to $\Delta$ has no relations.
Thus, after developing our main results using number-theoretic techniques, we explain their algebraic implications.

More precisely, in Section~\ref{sec:fpp} we define the fundamental parallelepiped poset $P(\Delta)$ associated to $\Delta$, where we say $\Delta$ is \emph{antichain} if $P(\Delta)\smallsetminus \{0\}$ has no relations.
To each partition $\lambda$ of $n$, we associate a lattice simplex $\Delta_\lambda$ having one unimodular facet, and we investigate the posets for these simplices in depth.
In Theorem~\ref{thm:order} we give a number-theoretic characterization of the relations in $P(\Delta_\lambda)$, and in Corollary~\ref{cor:order} we give a simplified characterization in the case where each part of $\lambda$ is relatively prime to $n-1$.
In Section~\ref{sec:experiments} we use these characterizations to generate empirical data, experimentally studying those $\Delta_\lambda$ for all partitions of $n$ with $n\leq 73$.
These experiments reveal that a substantial fraction of those $\lambda$ satisfying the relatively prime condition appear to have $\Delta_\lambda$ that are antichain.
Further, we experimentally study the prevalence of the antichain property among simplices with a restricted type of Hermite normal form, suggesting that the antichain property is common among simplices with this restriction.
In Section~\ref{sec:small} we shift perspective and investigate partitions having only one or two distinct parts.
Finally, in Section~\ref{sec:algebraic} we explain the algebraic implications of our work to Poincar\'e series of semigroup algebras associated to $\Delta$.
Specifically, we prove that if $\Delta$ is antichain, then the associated Poincar\'e series is rational.


\section{Fundamental Parallelepiped Posets and Their Structure}\label{sec:fpp}

\subsection{Lattice Simplices and Associated Posets}
Details regarding polytopes, cones, Hilbert bases, etc. as discussed in the following can be found in~\cite{CCD,MillerSturmfels}.
For a collection $\A=\{v_0,\dots,v_m\}$ of points in $\R^d$, we denote by $\conv(\A)$ the convex hull of $\A$.
In the case that $m=d$ and the set $\A^\circ\,:=\,\{(v_1-v_0),\dots,(v_d-v_0)\}$ is a vector space basis of $\R^d$, then we call $\Delta := \conv(\A)$ a $d$-simplex.
  We call the $v_i$'s the {\emph{vertices}} of $\Delta$, and if each $v_i$ is an integer point, i.e., lies in $\Z^d$, we call $\Delta$ a {\emph{lattice simplex}}.
We define the {\emph{conical hull}} of $\A$ to be the set
\[
  \cone(\A):=\left\{\sum_{i=0}^m\gamma_iv_i\,\text{ such that } 0\leq\gamma_i \right\}\subset \R^d. 
\]
Notice that the conical hull is unbounded, as in particular it contains the rays $\R_{\geq0}\cdot v_i$ for {${0\leq i \leq m}$}.

We are particularly interested in conical hulls of the following kind.
  Let $\A=\{v_0,\dots,v_d\}$ with $\Delta$ a lattice $d$-simplex.
  Then the {\emph{cone over}} $\Delta$ is the conical hull of the points $\{(1,v_0),\dots,(1,v_d)\}\subset\R^{d+1}$, and is denoted $\cone(\Delta)$.
We next recall the fundamental parallelepiped, a distinguished subset of $\cone(\Delta)$.

\begin{definition}For a lattice $d$-simplex $\Delta$ with vertices $v_0$ through $v_d$, the {\emph{fundamental parallelepiped}} $\Pi_\Delta$ is the set 
\[\Pi_\Delta\,:=\,\left\{\sum_{i=0}^{d}\gamma_i(1,v_i)\text{ such that }0\leq\gamma_i<1\right\}\subset\cone(\Delta).\]
\end{definition}

Interest in the fundamental parallelepiped $\Pi_\Delta$ arises mainly from the following well-known fact: every lattice point in $\cone(\Delta)$ can be written uniquely as a non-negative integer combination of the $(1,v_i)$'s and a lattice point in $\Pi_\Delta$.
To see this, note that because any element $z$ of $\cone(\Delta)\cap\Z^{d+1}$ lies in $\cone(\Delta)$, it is a non-negative linear combination of the $(1,v_i)$'s, i.e., there exist non-negative real coefficients $g_i$ such that
\[
  z=\sum_{i=0}^{d}g_i(1,v_i)=\left(\sum_{i=0}^{d}\left\lfloor g_i\right\rfloor (1,v_i)\right) + \left(\sum_{i=0}^{d}\{g_i\} (1,v_i)\right)
\]
where $\{g_i\}$ means the fractional part of $g_i$.
By setting $\gamma_i$ equal to $\{g_i\}$, we see that any point $z$ may be written  as a non-negative integral combination of the $(1,v_i)$'s and an integer point in $\Pi_\Delta\cap\Z^{d+1}$.
In particular, it is well-known that the set $\cone(\Delta)\cap\Z^{d+1}$ has a unique finite minimal additive generating set.

\begin{definition}\label{def:hb}
 The unique minimal additive generating set of $\cone(\Delta)\cap\Z^{d+1}$ is called the {\emph{Hilbert basis}} $\HH$ of $\cone(\Delta)$.
  It consists of the $(1,v_i)$'s and some lattice points  $h_1$ through $h_m$ in $\Pi_\Delta$ such that 
  \[
    \cone(\Delta)\cap\Z^{d+1}=\left\{\left(\sum_{i=0}^dr_i(1,v_i)\right)+\left(\sum_{j=1}^ms_i\,h_i\right)\text{ such that } r_i\,,\,s_j\in\Z_{\geq0}\right\} \, .
  \]
\end{definition}

The Hilbert basis consists of the cone generators $(1,v_i)$ together with the additively minimal elements $h_j$ of $\Pi_\Delta\cap\Z^{d+1}$.
 If the matrix whose columns are given by $(1,v_i)$ has determinant $\pm \v$, we say that the simplex $\Delta$ has {\emph{normalized volume}} $\v$. 
Since $\v$ is precisely the index of the sub-lattice generated by $(1,v_0)$ through $(1,v_d)$, we see that the normalized volume is equal to the number of lattice points in $\Pi_\Delta$.
If the normalized volume of $\Delta$ is equal to one, then we call $\Delta$ a {\emph{unimodular}} simplex. 

The set of lattice points $\Z^{d+1}\cap\Pi_\Delta$ can be equipped with the following partial order, inherited from a well-known partial order on the lattice points in $\Z^{d+1}\cap\cone(\Delta)$.
\begin{definition} The set $\Z^{d+1}\cap\Pi_\Delta$ is partially ordered by letting $\sigma\prec\mu$ if and only if $\mu-\sigma$ is an element of $\Z^{d+1}\cap\Pi_\Delta$. We call this poset the {\emph{fundamental parallelepiped poset}} $\PP(\Delta)$.
\end{definition}
Observe that the zero element of $\Z^{d+1}\cap\Pi_\Delta$ is below every other element of $\PP(\Delta)$, and that the minimal elements of $\PP(\Delta)\smallsetminus \{0\}$ are precisely the elements $h_1,\dots,h_m$ of the Hilbert basis of $\cone(\Delta)$.
Our interest is in the case where the Hilbert basis contains all the elements of $\PP(\Delta)\smallsetminus \{0\}$, leading to the following definition.

\begin{definition}
If $\Delta$ is a simplex such that $P(\Delta)\smallsetminus \{0\}$ is an antichain, we call $\Delta$ an \emph{antichain simplex} and say $\Delta$ is \emph{antichain}.
\end{definition}

\begin{example}
Recall that an empty simplex is one whose only lattice points are its vertices.
In the case that $\Delta$ is a $2$- or $3$-dimensional simplex, it is sufficient that $\Delta$ be empty in order for it to be antichain.
This follows since no lattice points of $\Pi_\Delta$ have $0$-th coordinate, i.e. height, equal to one, and thus the only possible $0$-th coordinates of lattice points in $\Pi_\Delta$ are $2$ or $3$.
However, sums of pairs of such lattice points have $0$-th coordinate equal to $4$, $5$, or $6$, and hence $\PP(\Delta)$ has no relations.
\end{example}

When attempting to determine whether or not $\Delta$ is antichain, the first problem encountered is to enumerate the elements of $\Pi_\Delta\cap\Z^{d+1}$.
As an initial attempt in this direction, let the matrix $A$ have columns given by $\{(1,v_i)\}_{0\leq i\leq d}$, where the $v_i$'s are the vertices of $\Delta$.
Recall that the normalized volume $\v$, the number of elements of $\Pi_\Delta$, may be computed by $\v=|\det A|$.
Recall also that the set $\Pi_\Delta$ is the image of $[0,1)^{d+1}$ under the linear transformation $A$, so that the preimage of a lattice point of $\Pi_\Delta$ must be a rational point of $[0,1)^{d+1}$ with denominator $\v$.
We may therefore compute the set of points in $\Pi_\Delta\cap\Z^{d+1}$ by considering each element of the form
\[
  \left\{A\cdot\left(\frac{b_0}{\v},\cdots,\frac{b_d}{\v}\right)^T\text{ such that }0\leq b_i<\v\right\} \, ,
\]
throwing out the ones which are not integer points.
Unfortunately, this test set grows as $\v^{d+1}$, and there is no easy way to describe the lattice points among them.

The software Normaliz~\cite{Normaliz} gives a more efficient implementation based on the fact that (possibly after a lattice translation) the matrix $A$ has a representation $A=UH$ where $U$ is a unimodular matrix and $H$ is in Hermite normal form. Bruns et al.~\cite{bruns2012power} show that, for $\{c_{i,i}\}_{0\leq i\leq d}$ given by the diagonal entries of the matrix $H$, lattice points in 
\[[0,c_{0,0})\times\cdots\times[0,c_{d,d})
\] are representatives of the quotient classes (in $\Z^{d+1}$ modulo the $(1,v_i)$'s) of the elements of $\Pi_\Delta\cap\Z^{d+1}$. It is then sufficient to consider the image under $A$ of the elements $\left(A^{-1}\cdot x\right)\mod\Z^{d+1}$ for $x\in[0,c_{0,0})\times\cdots\times[0,c_{d,d})$. This modular arithmetic is implemented in a computer easily enough, but introduces number theory to any analysis of the poset $P(\Delta)$.

\subsection{Lattice Simplices with a Unimodular Facet and their Posets}\label{sec:simplices}

In this work, we study a restricted class of simplices in order to avoid both of the methods described above for determining $\Pi_\Delta\cap \Z^{d+1}$.

\begin{definition} We say that a lattice $d$-simplex has a {\emph{unimodular facet}} if there exists a permutation $\pi$ in $\fS_{d+1}$ such that $\conv(\{v_{\pi_1},\dots,v_{\pi_d}\})$ is a unimodular lattice $(d-1)$-simplex.
\end{definition}

If $\Delta$ has a unimodular facet, then we may define a lattice preserving transformation taking $\Delta$ to $\conv(e_1,\dots,e_d,z)$ where the $e_i$ are the standard basis vectors of $\R^{d}$ and $z$ is a lattice point in $\Z^{d}$. Our goal in this chapter is to find a description of the relations in $P(\Delta)$ in terms of the coordinates of the point $z$.
To further simplify the situation, we consider only $z$ with positive entries.

\begin{definition}
  Let $\lambda=(\lambda_1,\dots,\lambda_d)$ be a lattice point in $\N^d$ such that $\sum_{i=1}^d\lambda_i=n$.
  We define $\Delta_\lambda:=\conv(e_1,\ldots,e_d,\lambda)\subset \R^d$ and use the shortened notation $\Pi_\lambda:=\Pi_{\Delta_\lambda}$ and $P(\lambda):=P(\Delta_\lambda)$.
\end{definition}

\begin{remark}
In the definition above, we can assume that $\lambda$ is a partition of $n$, as permuting the entries of $\lambda$ corresponds to a unimodular transformation of $\Delta_\lambda$.
  \end{remark}

\begin{remark}
  The simplices $\Delta_\lambda$ are defined in a similar manner to the simplices $\Delta_{(1,q)}$ that have recently been studied by multiple authors~\cite{BraunDavisPoinc,BraunDavisSolusIDP,BraunLiu,DavisMachine,Payne,SolusOsaka,SolusSimplices}.
  However, these are not the same families of simplices.
Specifically, the matrix giving the Hermite normal form of $\Delta_\lambda$ (after translating $\Delta_\lambda$ by $-e_1$) is
\[
  \left[
  \begin{array}{cccccc}
    0 & 1 & 0 & \cdots & 0 & \lambda_2 \\
        0 & 0 & 1 & \cdots & 0 & \lambda_3 \\
    \vdots & \vdots & \vdots & \ddots & \vdots & \vdots \\
    0 & 0 & 0 & \cdots & 1 & \lambda_d \\
    0 & 0 & 0 & \cdots & 0 & -1+\sum_i\lambda_i
    \end{array}\right] \, .
  \]
Note that $\sum_{i=2}^d\lambda_i\leq -1+\sum_{i=1}^d\lambda_i$.

Setting $Q=1+\sum_iq_i$, the Hermite normal form for $\Delta_{(1,q)}$ is
  \[
  \left[
  \begin{array}{cccccc}
    0 & 1 & 0 & \cdots & 0 & Q-q_2 \\
        0 & 0 & 1 & \cdots & 0 & Q-q_3 \\
    \vdots & \vdots & \vdots & \ddots & \vdots & \vdots \\
    0 & 0 & 0 & \cdots & 1 &Q-q_d \\
    0 & 0 & 0 & \cdots & 0 & Q
    \end{array}\right] \, .
\]
Note that for $d\geq 3$, we have $\sum_{i=2}^dQ-q_i>Q$.
Thus, these are distinct classes of simplices.
\end{remark}

\begin{remark}
Simplices with Hermite normal form having only a single non-trivial column, such as the ones given above, were previously considered by Hibi, Higashitani, and Li~\cite[Section 3]{HibiHigashitaniLi} in the context of Ehrhart theory.
\end{remark}

The following is a straightforward determinant calculation that also follows from the Hermite normal form given above.

\begin{proposition}\label{prop:fpplatticepoints}
The number of lattice points in $\Pi_\lambda$, which is equal to the normalized volume of $\Delta_\lambda$, is $\sum_{i=1}^d\lambda_i -1 = n-1$.
\end{proposition}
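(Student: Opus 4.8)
The plan is to obtain the normalized volume of $\Delta_\lambda$ as an absolute determinant and then invoke the fact, noted earlier, that the normalized volume equals the number of lattice points in $\Pi_\lambda$. First I would write down the matrix $A$ whose columns are the lifted generators $(1,v_i)\in\R^{d+1}$ of $\cone(\Delta_\lambda)$. Since the vertices of $\Delta_\lambda$ are $e_1,\dots,e_d$ together with $\lambda=(\lambda_1,\dots,\lambda_d)$, this is the $(d+1)\times(d+1)$ matrix whose top row is all ones, whose lower-left $d\times d$ block is the identity $I_d$, and whose final column is $(1,\lambda_1,\dots,\lambda_d)^T$:
\[
A = \begin{pmatrix}
1 & 1 & \cdots & 1 & 1 \\
1 & 0 & \cdots & 0 & \lambda_1 \\
0 & 1 & \cdots & 0 & \lambda_2 \\
\vdots & & \ddots & & \vdots \\
0 & 0 & \cdots & 1 & \lambda_d
\end{pmatrix}.
\]

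The key step is a single column operation exploiting the identity block: for each $i=1,\dots,d$, subtract $\lambda_i$ times column $i$ from the last column. Because each $\lambda_i$ occupies its own row (the row of $I_d$ carrying a $1$ in column $i$), this clears all of the $\lambda_i$ in rows $2$ through $d+1$, while the top entry of the last column becomes $1-\sum_{i=1}^d\lambda_i = 1-n$. The last column is then $(1-n,0,\dots,0)^T$, so expanding the determinant along it reduces to $(1-n)$ times the determinant of $I_d$. Therefore $\det A = \pm(1-n)$, and the normalized volume is $|\det A| = n-1 = \sum_{i=1}^d\lambda_i - 1$, which by the earlier observation also counts the lattice points of $\Pi_\lambda$.

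I expect no genuine obstacle here; the computation is routine and the only point requiring attention is bookkeeping — keeping the extra height coordinate in mind so that $A$ is $(d+1)\times(d+1)$ rather than $d\times d$, and noting that the sign arising in the cofactor expansion is immaterial since we take absolute values. As an independent check, the same value can be read directly off the Hermite normal form displayed above, whose diagonal entries are $1,\dots,1,-1+\sum_i\lambda_i$, giving determinant $\pm(n-1)$ and confirming the direct calculation.
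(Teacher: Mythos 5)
Your proof is correct and follows exactly the route the paper indicates: the paper dispatches this proposition as ``a straightforward determinant calculation that also follows from the Hermite normal form given above,'' and you have simply carried out both verifications explicitly. Your column operation, the resulting last column $(1-n,0,\dots,0)^T$, and the cofactor expansion are all sound, as is the cross-check against the diagonal of the Hermite normal form.
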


We can now describe the integer points in $\Pi_\lambda$ using only the entries of $\lambda$.

\begin{proposition}\label{prop:fppparameter}
For each integer $b$ with $0\leq b < n-1$, there is a unique lattice point $p(b)$ in $\Pi_\lambda$ given by 
\begin{equation}\label{eqn:p(b)}
p(b) =\left(\left(\sum_{i=1}^d\left\lceil \frac{b\lambda_i}{n-1}\right\rceil\right)-b\;,\; \left\lceil \frac{b\lambda_1}{n-1}\right\rceil\;,\;\dots\;,\;\left\lceil \frac{b\lambda_d}{n-1}\right\rceil\right) \, .
\end{equation}
Every integer point in $\Pi_\lambda$ arises in this manner, and thus we identify the integer $b$ with the lattice point $p(b)$.
\end{proposition}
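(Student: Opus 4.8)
The plan is to parametrize points of $\Pi_\lambda$ by their coefficients on the $d+1$ cone generators and then translate the lattice-point and half-open conditions into a statement about a single integer $b$. Write an arbitrary point $z\in\cone(\Delta_\lambda)$ uniquely as $z=\gamma_0(1,\lambda)+\sum_{i=1}^d\gamma_i(1,e_i)$; this expansion exists and is unique because the $d+1$ generators are linearly independent (their matrix has determinant $\pm(n-1)\neq 0$). Reading off coordinates gives $z_j=\gamma_0\lambda_j+\gamma_j$ for $1\leq j\leq d$ and height $z_0=\gamma_0+\sum_{i=1}^d\gamma_i$, and by definition $z\in\Pi_\lambda$ exactly when every $\gamma_i\in[0,1)$.

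Next I would impose that $z$ is a lattice point and extract the formula. From $\gamma_j=z_j-\gamma_0\lambda_j\in[0,1)$ with $z_j\in\Z$, the integer $z_j$ is forced to be the unique integer in the half-open interval $[\gamma_0\lambda_j,\gamma_0\lambda_j+1)$, namely $z_j=\lceil\gamma_0\lambda_j\rceil$. Substituting into the height and using $\sum_i\lambda_i=n$ gives $z_0=\left(\sum_{j=1}^d z_j\right)-\gamma_0(n-1)$; since $z_0$ and the $z_j$ are integers, $b:=\gamma_0(n-1)$ must be an integer, and $0\leq\gamma_0<1$ pins it to $b\in\{0,1,\dots,n-2\}$. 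Setting $\gamma_0=b/(n-1)$ then reproduces Equation~\eqref{eqn:p(b)} exactly. This direction simultaneously shows that every lattice point of $\Pi_\lambda$ equals $p(b)$ for a single value of $b$ in the stated range, since $b=\gamma_0(n-1)$ is recovered from $z$.

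For the converse and for injectivity, I would run the computation backwards: for each $b\in\{0,\dots,n-2\}$ set $\gamma_0=b/(n-1)\in[0,1)$ and $\gamma_j=\lceil b\lambda_j/(n-1)\rceil-b\lambda_j/(n-1)$, which lies in $[0,1)$ because $\lceil x\rceil-x\in[0,1)$ for every real $x$. The resulting point has integer coordinates by construction, so $p(b)\in\Pi_\lambda\cap\Z^{d+1}$, and distinct values of $b$ produce distinct $\gamma_0$, hence distinct points by uniqueness of the generator expansion; thus $b\mapsto p(b)$ is injective. Surjectivity then follows by counting: Proposition~\ref{prop:fpplatticepoints} gives exactly $n-1$ lattice points in $\Pi_\lambda$, and we have exhibited $n-1$ distinct ones, so these are all of them.

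The computation is elementary throughout, so there is no serious obstacle; the only points requiring care are the bookkeeping that turns integrality of $z_0$ into the clean constraint $\gamma_0=b/(n-1)$ via $\sum_i\lambda_i=n$, and the verification that the half-open condition $\gamma_j\in[0,1)$ lands the coordinates on ceilings rather than floors. Invoking the count from Proposition~\ref{prop:fpplatticepoints} lets one avoid separately re-verifying integrality in a coset-by-coset fashion.
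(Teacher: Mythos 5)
Your proof is correct and follows essentially the same route as the paper: expand a point of $\Pi_\lambda$ in the cone generators, use the half-open condition $\gamma_j\in[0,1)$ to force the coordinates to be ceilings $\lceil\gamma_0\lambda_j\rceil$, deduce from integrality of the height that $\gamma_0(n-1)\in\Z$, and finish with the count of $n-1$ lattice points from Proposition~\ref{prop:fpplatticepoints}. Your direct verification that each $p(b)$ lies in $\Pi_\lambda\cap\Z^{d+1}$ and that $b\mapsto p(b)$ is injective is a small (harmless) elaboration of what the paper handles via the counting argument alone.
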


\begin{proof}
For an element $\sum_{i=1}^d\gamma_i(1,e_i) + \gamma_{d+1}(1,\lambda) \in\Pi_\lambda\cap\Z^{d+1}$, we have
\[
\left(\left(\sum_{i=1}^{d+1}\gamma_i\right)\;,\;(\gamma_1+\gamma_{d+1}\lambda_1)\;,\;\dots\;,\;(\gamma_d+\gamma_{d+1}\lambda_d)\right) \in \Z^{d+1}\, .
\]
Because of the condition that each $\gamma_i$ is strictly less than one, for each $i$ we have
\[
\gamma_i=\lceil \gamma_{d+1}\lambda_i\rceil -  \gamma_{d+1}\lambda_i \, ,
\]
thus
\begin{align*}&\left(\gamma_{d+1}+\sum_{i=1}^{d}\left(\lceil \gamma_{d+1}\lambda_i\rceil-\gamma_{d+1}\lambda_i\right),\lceil \gamma_{d+1}\lambda_1\rceil,\dots,\lceil \gamma_{d+1}\lambda_d\rceil\right)\\
=&\left(\gamma_{d+1}\left(1-\sum_{i=1}^{d}\lambda_i\right)+\sum_{i=1}^d\lceil \gamma_{d+1}\lambda_i\rceil,\lceil \gamma_{d+1}\lambda_1\rceil,\dots,\lceil \gamma_{d+1}\lambda_d\rceil\right).\end{align*}

Observe that the first coordinate of this vector is an integer, hence
\[
\gamma_{d+1}\left(1-\sum_{i=1}^{d}\lambda_i\right)=\gamma_{d+1}(1-n) \in \Z \, .
\] 
It follows that $\gamma_{d+1}$ is a rational number of the form $b/(n-1)$, and every lattice point arises in this manner and is of the form
\[
\left(\left(\sum_{i=1}^d\left\lceil \frac{b\lambda_i}{n-1}\right\rceil\right)-b\;,\; \left\lceil \frac{b\lambda_1}{n-1}\right\rceil\;,\;\dots\;,\;\left\lceil \frac{b\lambda_d}{n-1}\right\rceil\right) \, .
\]
Since there are $n-1$ lattice points in $\Pi_\lambda$ by Proposition~\ref{prop:fpplatticepoints}, there must be one unique lattice point for each $0\leq b<n-1$.
\end{proof}

Using the notation from~\eqref{eqn:p(b)}, for $0\leq b<n-1$ we have that the zeroth coordinate of $p(b)$ is
\[
p(b)_0:=\left(\sum_{i=1}^d\left\lceil \frac{b\lambda_i}{n-1}\right\rceil\right)-b \, .
\]
Recall that we freely identify the integer $b$ with the lattice point $p(b)$.
The following lemma provides a connection between the parameterization of the integer points in $\Pi_\lambda$ and the order in $P(\lambda)$.

\begin{lemma}\label{lem:ij}
For $i,j\in P(\lambda)$ with $i\neq j$, we have $i\prec j$ if and only if $i<j$ and $p(i)+p(j-i)=p(j)$.
\end{lemma}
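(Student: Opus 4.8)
The plan is to read off membership in $\Pi_\lambda$ from a single coordinate, using the parameterization of Proposition~\ref{prop:fppparameter}. Recall that each lattice point $q\in\cone(\Delta_\lambda)$ has a unique expression $q=\sum_{k=1}^d\gamma_k(1,e_k)+\gamma_{d+1}(1,\lambda)$ with real $\gamma_k$, and that $q\in\Pi_\lambda$ exactly when all $\gamma_k\in[0,1)$. For $q=p(b)$ these coefficients are $\gamma_{d+1}=b/(n-1)$ and $\gamma_k=\lceil b\lambda_k/(n-1)\rceil-b\lambda_k/(n-1)$ for $1\le k\le d$; in particular the last coefficient $b/(n-1)$ alone determines $b$, and hence determines $p(b)$ uniquely among the $n-1$ points of $\Pi_\lambda$.

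First I would use linearity of the coefficient map: the expansion of $p(j)-p(i)$ has coefficients $\gamma_k^{(j)}-\gamma_k^{(i)}$ for $1\le k\le d$ and last coefficient $(j-i)/(n-1)$. By definition $i\prec j$ means $p(j)-p(i)\in\Pi_\lambda\cap\Z^{d+1}$, i.e.\ all of these coefficients lie in $[0,1)$.

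For the forward implication, suppose $i\prec j$ with $i\neq j$. Looking only at the last coefficient, $(j-i)/(n-1)\in[0,1)$; since $0\le i,j\le n-2$ this forces $j>i$, giving $i<j$, and moreover $0<j-i<n-1$. Because $p(j)-p(i)\in\Pi_\lambda$ it equals $p(b)$ for a unique $b\in\{0,\dots,n-2\}$, and comparing last coefficients gives $b/(n-1)=(j-i)/(n-1)$, so $b=j-i$ and $p(j)-p(i)=p(j-i)$; rearranging yields $p(i)+p(j-i)=p(j)$. Conversely, if $i<j$ and $p(i)+p(j-i)=p(j)$, then $p(j)-p(i)=p(j-i)$, which lies in $\Pi_\lambda\cap\Z^{d+1}$ since $0<j-i<n-1$, so $i\prec j$ by definition.

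The only real content is the uniqueness observation that a point of $\Pi_\lambda$ is pinned down by its last coefficient $\gamma_{d+1}=b/(n-1)$, which is exactly what Proposition~\ref{prop:fppparameter} supplies. I do not expect a serious obstacle; the point requiring the most care is verifying that the last coefficient of $p(j)-p(i)$ genuinely lies in $[0,1)$ rather than merely being congruent to such a value, since this is what both rules out $j<i$ and forces the representative to be $p(j-i)$ exactly.
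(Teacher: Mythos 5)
Your proof is correct, and it is a genuine (if modest) variant of the paper's argument. Both proofs rest on Proposition~\ref{prop:fppparameter}: from $i\prec j$ one gets $p(j)-p(i)=p(\ell)$ for a unique $\ell$, and the content is identifying $\ell=j-i$. The paper does this in the standard coordinates: it first extracts the ceiling identity $\lceil i\lambda_t/(n-1)\rceil+\lceil \ell\lambda_t/(n-1)\rceil=\lceil j\lambda_t/(n-1)\rceil$ from coordinates $1,\dots,d$, then substitutes into the zeroth coordinate to reduce the equation to $i+\ell=j$ (and separately argues $\ell>0$ via $p(0)=0$ and injectivity of $p$). You instead work in the basis of cone generators $(1,e_1),\dots,(1,e_d),(1,\lambda)$, where membership in $\Pi_\lambda$ is literally ``all coefficients in $[0,1)$'' and the last coefficient of $p(b)$ is the linear functional $b/(n-1)$; linearity then reads off $b=j-i$ from a single coefficient, and the constraint $(j-i)/(n-1)\in[0,1)$ simultaneously yields $i<j$. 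Your approach buys a cleaner argument --- no ceiling manipulation, no separate positivity step, and the two conclusions ($i<j$ and the witness being $j-i$) fall out of one inequality --- at the cost of making explicit the (easy but necessary) facts that the generators form an $\R$-basis of $\R^{d+1}$, so the coefficient expansion is unique and extends linearly to differences like $p(j)-p(i)$ that need not lie in the cone a priori; you correctly flag exactly this point, and since $\det$ of the generator matrix is $\pm(n-1)\neq 0$ it holds. The reverse implications are identical in both proofs.
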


\begin{proof}
For the forward direction, if $i\prec j$, then by Proposition~\ref{prop:fppparameter} there exists a point $p(\ell)\in P(\lambda)$ such that $p(i)+p(\ell)=p(j)$.
Note that $\ell>0$ since $p(0)=0$.
It follows that for all $1\leq t\leq d$, we have
\[
\left\lceil \frac{i\lambda_t}{n-1}\right\rceil + \left\lceil \frac{\ell\lambda_t}{n-1}\right\rceil = \left\lceil \frac{j\lambda_t}{n-1}\right\rceil \, .
\]
Given this, we have that $p(i)_1+p(\ell)_1=p(j)_1$ reduces to $i+\ell=j$, forcing $\ell=j-i>0$, as desired.

For the reverse direction, if $i<j$ and $p(i)+p(j-i)=p(j)$, then we have $i\prec j$ by definition.
\end{proof}

We now give two propositions demonstrating how Lemma~\ref{lem:ij} can be used in practice.

\begin{proposition}\label{prop:twins}
If $i\prec j$ in $P(\lambda)$, then also $j-i\prec j$ in $P(\lambda)$.
\end{proposition}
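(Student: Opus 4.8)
The plan is to reduce the statement to the commutativity of the additive relation that characterizes $\prec$ in Lemma~\ref{lem:ij}. First I would dispose of the trivial case: we may assume $i\neq 0$, since if $i=0$ then $j-i=j$ and the conclusion $j\prec j$ holds immediately because $p(j)-p(j)=0\in\Pi_\lambda\cap\Z^{d+1}$. So assume $i>0$ and apply Lemma~\ref{lem:ij} to the hypothesis $i\prec j$ (recalling $i\neq j$), obtaining both $i<j$ and the key identity $p(i)+p(j-i)=p(j)$. The essential observation is that this identity is symmetric in its two summands $i$ and $j-i$, which is exactly the feature the conclusion $j-i\prec j$ should exploit; the name ``twins'' reflects that $i$ and $j-i$ play interchangeable roles as summands of $j$.

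Next I would verify the two conditions of Lemma~\ref{lem:ij} for the pair $(j-i,j)$. Set $k:=j-i$. Since $i<j$ and $i>0$, we have $0<k<j<n-1$, so $k$ is a genuine element of $P(\lambda)$ distinct from $j$, and the first condition $k<j$ holds. For the second condition I must check $p(k)+p(j-k)=p(j)$; but $j-k=j-(j-i)=i$, so this reads $p(j-i)+p(i)=p(j)$, which is precisely the identity established in the first paragraph with the two summands written in the opposite order. Invoking Lemma~\ref{lem:ij} in the reverse direction then yields $j-i\prec j$, as desired.

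I expect there to be no genuine mathematical obstacle here: once Lemma~\ref{lem:ij} is available, the entire content of the proposition is the commutativity of addition together with the arithmetic identity $j-(j-i)=i$. The only points requiring care are bookkeeping in nature, namely confirming that $j-i$ lands in the valid index range $[0,n-1)$ so that $p(j-i)$ is a legitimate element of $P(\lambda)$, and that $j-i\neq j$ so that the distinctness hypothesis of Lemma~\ref{lem:ij} is met; both are guaranteed by the standing assumption $0<i<j<n-1$.
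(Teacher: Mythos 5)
Your proof is correct and follows essentially the same route as the paper, which also reduces the statement to Lemma~\ref{lem:ij} and the symmetry of the identity $p(i)+p(j-i)=p(j)$ under swapping the summands $i$ and $j-i$. Your extra bookkeeping (the $i=0$ case and the check that $0<j-i<j$) is careful handling of details the paper's one-line chain of equivalences leaves implicit.
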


\begin{proof}
By Lemma~\ref{lem:ij}, we have $i\prec j$ if and only if $i<j$ and $p(i)+p(j-i)=p(j)$ if and only if $j-i<j$ and $p(i)+p(j-i)=p(j)$ if and only if $j-i\prec j$.
\end{proof}

\begin{proposition}\label{prop:n-22}
Let $\lambda=(n-2,2)$.
Then $P(n-2,2)$ is equal to the following poset on the elements $\{1,2,\ldots,n-2\}$:
The minimal elements of $P(n-2,2)$ are $\{1,2,\ldots,\left\lfloor \frac{n-1}{2}\right\rfloor\}$ and the maximal elements are $\{\left\lfloor \frac{n-1}{2}\right\rfloor +1,\ldots, n-2\}$.
The cover relations are that the maximal element $\left\lfloor \frac{n-1}{2}\right\rfloor +j$ covers $\{j,j+1,\ldots,\left\lfloor \frac{n-1}{2}\right\rfloor\}$.
\end{proposition}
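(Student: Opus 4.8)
The plan is to first compute the parameterized points $p(b)$ from Proposition~\ref{prop:fppparameter} explicitly for $\lambda=(n-2,2)$, and then read off every relation directly from Lemma~\ref{lem:ij} by tracking the height (zeroth) coordinate. Write $M=\lfloor\frac{n-1}{2}\rfloor$. Since $\frac{b(n-2)}{n-1}=b-\frac{b}{n-1}$ with $0<\frac{b}{n-1}<1$ for $1\leq b\leq n-2$, the ceiling $\lceil\frac{b(n-2)}{n-1}\rceil$ equals $b$; meanwhile $\lceil\frac{2b}{n-1}\rceil$ equals $1$ for $1\leq b\leq M$ and $2$ for $M+1\leq b\leq n-2$. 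Substituting into~\eqref{eqn:p(b)} and using $p(b)_0=\lceil\frac{b(n-2)}{n-1}\rceil+\lceil\frac{2b}{n-1}\rceil-b$, the first coordinate collapses to $\lceil\frac{2b}{n-1}\rceil$, yielding the clean description $p(b)=(1,b,1)$ for $1\leq b\leq M$ and $p(b)=(2,b,2)$ for $M+1\leq b\leq n-2$.

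The key observation is then that the height of every nonzero $p(b)$ is either $1$ or $2$. By Lemma~\ref{lem:ij}, a relation $i\prec j$ forces $p(i)_0+p(j-i)_0=p(j)_0$; since each summand is at least $1$ and the right-hand side is at most $2$, the only possibility is $p(i)_0=p(j-i)_0=1$ and $p(j)_0=2$. Thus both $i$ and $j-i$ must be ``small'' (in $\{1,\ldots,M\}$) and $j$ must be ``large'' (in $\{M+1,\ldots,n-2\}$). Conversely, when $i$ and $j-i$ are both small we compute $p(i)+p(j-i)=(1,i,1)+(1,j-i,1)=(2,j,2)$, which equals $p(j)$ precisely when $j$ is large, and in that case $i<j$ holds automatically. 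Hence $i\prec j$ if and only if $1\leq i\leq M$, $1\leq j-i\leq M$, and $j>M$.

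From this characterization the poset structure follows immediately. No small element lies above anything, so $\{1,\ldots,M\}$ are exactly the minimal elements; no large element lies below anything, so $\{M+1,\ldots,n-2\}$ are exactly the maximal elements, where one checks each large $j$ does cover something, for instance $M\prec j$, using $M+1\leq j\leq 2M$. Because every relation runs from a minimal to a maximal element, all relations are cover relations. Finally, fixing a maximal element $M+j$ and solving $1\leq i\leq M$ together with $1\leq (M+j)-i\leq M$ yields exactly $j\leq i\leq M$, so $M+j$ covers precisely $\{j,j+1,\ldots,M\}$, as claimed.

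The only delicate point is the ceiling computation, specifically pinning down the threshold $M=\lfloor\frac{n-1}{2}\rfloor$ where $\lceil\frac{2b}{n-1}\rceil$ jumps from $1$ to $2$; this requires separating the cases $n-1$ even and $n-1$ odd, though each is routine. Everything after that is a direct consequence of the two-valued height together with Lemma~\ref{lem:ij}, so I expect no further obstacle.
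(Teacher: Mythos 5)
Your proof is correct and follows essentially the same route as the paper: both rest on Lemma~\ref{lem:ij}, and your closed form $p(b)=(1,b,1)$ for $1\leq b\leq\lfloor\frac{n-1}{2}\rfloor$ and $p(b)=(2,b,2)$ for $\lfloor\frac{n-1}{2}\rfloor+1\leq b\leq n-2$ is exactly the content of the paper's equations~\eqref{eq:2} and~\eqref{eq:n-2}, the latter reducing to $i+(j-i)=j$ because $\lceil b(n-2)/(n-1)\rceil=b$. If anything, your height-additivity argument (each nonzero height is $1$ or $2$, so only $1+1=2$ can occur) is slightly more complete than the paper's written case analysis, which explicitly rules out only pairs $i<j$ both in $\{1,\ldots,\lfloor\frac{n-1}{2}\rfloor\}$ or both in $\{\lfloor\frac{n-1}{2}\rfloor+1,\ldots,n-2\}$ and leaves the case $i\leq\lfloor\frac{n-1}{2}\rfloor<j$ with $j-i>\lfloor\frac{n-1}{2}\rfloor$ implicit, whereas your argument disposes of all cases uniformly.
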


\begin{figure}[ht]
\begin{center}
\begin{tikzpicture}
    \node (10) at (0,0) {$5$};
    \node [right of = 10] (top6) {$6$};
    \node [right of = top6] (top7) {$7$};
    \node [right of = top7] (top8) {$8$};
    \node [below of = 10] (bot1) {$1$};
    \node [below of = top6] (bot2) {$2$};
    \node [below of = top7] (bot3) {$3$};
    \node [below of = top8] (bot4) {$4$};

    \draw [thick, shorten <=-2pt, shorten >=-2pt] (10) -- (bot1);
    \draw [thick, shorten <=-2pt, shorten >=-2pt] (10) -- (bot2);
    \draw [thick, shorten <=-2pt, shorten >=-2pt] (10) -- (bot3);
    \draw [thick, shorten <=-2pt, shorten >=-2pt] (10) -- (bot4);
    \draw [thick, shorten <=-2pt, shorten >=-2pt] (top6) -- (bot2);
    \draw [thick, shorten <=-2pt, shorten >=-2pt] (top6) -- (bot3);
    \draw [thick, shorten <=-2pt, shorten >=-2pt] (top6) -- (bot4);
    \draw [thick, shorten <=-2pt, shorten >=-2pt] (top7) -- (bot3);
    \draw [thick, shorten <=-2pt, shorten >=-2pt] (top7) -- (bot4);
    \draw [thick, shorten <=-2pt, shorten >=-2pt] (top8) -- (bot4);
\end{tikzpicture}
\caption{The poset $P(8,2)$.}
\end{center}
\end{figure}

\begin{proof}
By Lemma~\ref{lem:ij}, we see that $i\prec j$ if and only if $i<j$ and the following hold:
\begin{align}
\label{eq:2} \left\lceil \frac{2i}{n-1}\right\rceil + \left\lceil \frac{2(j-i)}{n-1}\right\rceil & = \left\lceil \frac{2j}{n-1}\right\rceil \\
\label{eq:n-2} \left\lceil \frac{i(n-2)}{n-1}\right\rceil + \left\lceil \frac{(j-i)(n-2)}{n-1}\right\rceil & = \left\lceil \frac{j(n-2)}{n-1}\right\rceil
\end{align}
It is straightforward to verify that these equations hold for the values claimed in the proposition statement.

To show that no other pairs $i<j$ lead to relations $i\prec j$, suppose that $1\leq i<j\leq \left\lfloor \frac{n-1}{2}\right\rfloor$.
Then in~\eqref{eq:2}, we obtain $1+1=1$, which is false.
Similarly, if $\left\lfloor \frac{n-1}{2}\right\rfloor +1 \leq i<j\leq n-2$, then in~\eqref{eq:2} we obtain $2+2=2$, which is again false.
\end{proof}


\subsection{Characterizing the Relations in $P(\lambda)$}

While Lemma~\ref{lem:ij} is a reasonable first tool, as Propositions~\ref{prop:twins} and~\ref{prop:n-22} illustrate, in general it is difficult to compute these relations directly.
Thus, we need to create a more sophisticated mechanism through which to study $P(\lambda)$.
In this section, we establish in Theorem~\ref{thm:order} a number-theoretic characterization of the relations in $P(\lambda)$.
Further, Corollary~\ref{cor:order} provides a particularly simple characterization in the case where each part of $\lambda$ is relatively prime to $n-1$.

For $0\leq i<n-1$, define the non-negative integers $r_{t,i}$ and $0\leq s_{t,i}<n-1$ by 
\begin{equation}
i\lambda_t=r_{t,i}(n-1)+s_{t,i} \, . \label{eqn:rsdef}
\end{equation}

\begin{lemma}\label{lem:scondition}
We have $i\prec j$ in $P(\lambda)$ if and only if $i<j$ and for every $t\in \{1,\ldots,d\}$ we have
\begin{equation}\label{eqn:seqn}
\frac{s_{t,i}+s_{t,j-i}-s_{t,j}}{n-1}=\left\lceil \frac{s_{t,i}}{n-1}\right\rceil + \left\lceil \frac{s_{t,j-i}}{n-1}\right\rceil - \left\lceil \frac{s_{t,j}}{n-1}\right\rceil 
\end{equation}
\end{lemma}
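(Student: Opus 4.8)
The plan is to translate the characterization of $i \prec j$ already supplied by Lemma~\ref{lem:ij} into the language of the quotients $r_{t,i}$ and remainders $s_{t,i}$ from~\eqref{eqn:rsdef}. First I would observe that by Lemma~\ref{lem:ij}, $i \prec j$ holds precisely when $i < j$ and $p(i) + p(j-i) = p(j)$. Reading the formula~\eqref{eqn:p(b)} coordinate by coordinate, this vector identity amounts to $d+1$ scalar equations, one from the zeroth coordinate and one from each coordinate $t = 1,\dots,d$, the latter being
\[
\left\lceil \frac{i\lambda_t}{n-1}\right\rceil + \left\lceil \frac{(j-i)\lambda_t}{n-1}\right\rceil = \left\lceil \frac{j\lambda_t}{n-1}\right\rceil .
\]
Summing these $d$ equations over $t$ and using $-i - (j-i) = -j$ reproduces the zeroth-coordinate equation, so that equation is redundant; hence $i \prec j$ is equivalent to $i < j$ together with the $d$ displayed ceiling equations.

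Next, for each fixed $t$ I would rewrite the ceiling of a multiple in terms of its quotient and remainder. Since $i\lambda_t = r_{t,i}(n-1) + s_{t,i}$ with $r_{t,i}$ an integer and $0 \le s_{t,i} < n-1$, we have $\frac{i\lambda_t}{n-1} = r_{t,i} + \frac{s_{t,i}}{n-1}$, and therefore $\left\lceil i\lambda_t/(n-1)\right\rceil = r_{t,i} + \left\lceil s_{t,i}/(n-1)\right\rceil$, with the analogous identities for $j-i$ and $j$. Substituting these into the ceiling equation for index $t$ converts it into
\[
(r_{t,i} + r_{t,j-i} - r_{t,j}) + \left(\left\lceil \tfrac{s_{t,i}}{n-1}\right\rceil + \left\lceil \tfrac{s_{t,j-i}}{n-1}\right\rceil - \left\lceil \tfrac{s_{t,j}}{n-1}\right\rceil\right) = 0 .
\]

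The final step is to eliminate the quotients in favor of the remainders. Applying the defining relation~\eqref{eqn:rsdef} to the identity $i\lambda_t + (j-i)\lambda_t = j\lambda_t$ yields $(r_{t,i} + r_{t,j-i} - r_{t,j})(n-1) = s_{t,j} - s_{t,i} - s_{t,j-i}$, so that $r_{t,i} + r_{t,j-i} - r_{t,j} = -\frac{s_{t,i} + s_{t,j-i} - s_{t,j}}{n-1}$. Inserting this into the displayed equation and rearranging produces exactly~\eqref{eqn:seqn}. Since each step in this chain is a reversible equivalence carried out independently for each $t$, running the argument in both directions establishes the stated biconditional. The computation is entirely elementary; the only point deserving explicit care is the bookkeeping that guarantees the zeroth coordinate imposes no additional constraint, which I would present carefully to forestall any impression of a missing case.
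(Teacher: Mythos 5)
Your proposal is correct and follows essentially the same route as the paper's proof: reduce to the coordinate-wise ceiling equations via Lemma~\ref{lem:ij}, use $\left\lceil \ell\lambda_t/(n-1)\right\rceil = r_{t,\ell}+\left\lceil s_{t,\ell}/(n-1)\right\rceil$, and eliminate the quotients through the identity $r_{t,i}+r_{t,j-i}-r_{t,j} = -\frac{s_{t,i}+s_{t,j-i}-s_{t,j}}{n-1}$. Your explicit check that the zeroth coordinate is redundant is a small point the paper leaves implicit, but it is a presentational refinement rather than a different argument.
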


\begin{proof}
After adding and subtracting~\eqref{eqn:rsdef} for the values $i$, $j-i$, and $j$, we obtain
\begin{equation}\label{eqn:rtos}
r_{t,i}+r_{t,j-i}-r_{t,j} = \frac{-s_{t,i}-s_{t,j-i}+s_{t,j}}{n-1} \, .
\end{equation}
By dividing both sides of~\eqref{eqn:rsdef} by $n-1$ and taking the ceiling of both sides, we see that
\begin{equation}
\label{eqn:floor} \left\lceil \frac{\ell\lambda_t}{n-1}\right\rceil = r_{t,\ell}+\left\lceil \frac{s_{t,\ell}}{n-1}\right\rceil \, .
\end{equation}
Adding~\eqref{eqn:floor} with itself for $\ell$ equal to $i$ and $j-i$, then subtracting the equation with $\ell=j$, and further applying~\eqref{eqn:rtos},  we obtain
\begin{align*}
& \left\lceil \frac{i\lambda_t}{n-1}\right\rceil + \left\lceil \frac{(j-i)\lambda_t}{n-1}\right\rceil - \left\lceil \frac{j\lambda_t}{n-1}\right\rceil \\
 =& \, r_{t,i}+r_{t,j-i}-r_{t,j}+\left\lceil \frac{s_{t,i}}{n-1}\right\rceil + \left\lceil \frac{s_{t,j-i}}{n-1}\right\rceil - \left\lceil \frac{s_{t,j}}{n-1}\right\rceil \\
=& \, \frac{-s_{t,i}-s_{t,j-i}+s_{t,j}}{n-1}+\left\lceil \frac{s_{t,i}}{n-1}\right\rceil + \left\lceil  \frac{s_{t,j-i}}{n-1}\right\rceil - \left\lceil \frac{s_{t,j}}{n-1}\right\rceil \, .
\end{align*}
Recall that $i\prec j$ in $P(\lambda)$ if and only if $p(i)+p(j-i)=p(j)$ if and only if for all $t$, we have that 
\[
\left\lceil \frac{i\lambda_t}{n-1}\right\rceil + \left\lceil \frac{(j-i)\lambda_t}{n-1}\right\rceil - \left\lceil \frac{j\lambda_t}{n-1}\right\rceil =0 \, ,
\]
which by our computation above holds if and only if 
\[
\frac{s_{t,i}+s_{t,j-i}-s_{t,j}}{n-1}=\left\lceil \frac{s_{t,i}}{n-1}\right\rceil + \left\lceil  \frac{s_{t,j-i}}{n-1}\right\rceil - \left\lceil \frac{s_{t,j}}{n-1}\right\rceil \, .
\]
\end{proof}

\begin{theorem}\label{thm:order}
Let $\lambda$ be a partition of $n$.
We have $i\prec j$ in $P(\lambda)$ if and only if $i<j$ and for each $t\in \{1,\ldots,d\}$, one of the following holds:
\begin{enumerate}
\item $s_{t,i}> s_{t,j}>0$,
\item $s_{t,i}=0$ and $s_{t,j}=s_{t,j-i}$, or
\item $s_{t,j}=s_{t,i}>0$ and $s_{j-i}=0$.
\end{enumerate}
\end{theorem}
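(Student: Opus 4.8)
The plan is to simplify the criterion of Lemma~\ref{lem:scondition} by analyzing both sides of~\eqref{eqn:seqn} purely in terms of the residues. Fix $t$ and abbreviate $a=s_{t,i}$, $b=s_{t,j-i}$, and $c=s_{t,j}$; by definition~\eqref{eqn:rsdef} each of these lies in $\{0,1,\ldots,n-2\}$. The first observation is that for any integer $s$ with $0\leq s<n-1$ we have $\lceil s/(n-1)\rceil=0$ if $s=0$ and $\lceil s/(n-1)\rceil=1$ otherwise, so each ceiling term on the right-hand side of~\eqref{eqn:seqn} is simply the indicator of positivity of its argument. Thus the right-hand side is $[a>0]+[b>0]-[c>0]$, a quantity in $\{-1,0,1,2\}$.

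The second observation controls the left-hand side. Since $i\lambda_t+(j-i)\lambda_t=j\lambda_t$, reducing modulo $n-1$ gives $a+b\equiv c\pmod{n-1}$, so the numerator $a+b-c$ is an integer multiple of $n-1$. Combined with the bounds $0\leq a,b,c<n-1$, which yield $-(n-1)<a+b-c<2(n-1)$, this pins $a+b-c$ to the set $\{0,n-1\}$. Hence the left-hand side of~\eqref{eqn:seqn} equals either $0$ or $1$, and the whole equation reduces to a finite comparison between two small integers.

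With these reductions in hand, I would finish by a short case analysis on whether each of $a,b,c$ is zero or positive. The congruence $a+b\equiv c\pmod{n-1}$ immediately excludes several Boolean configurations as vacuous: for instance $a=b=0$ forces $c=0$, and $a>0$, $b=0$ forces $c\equiv a$, hence $c=a>0$. For the surviving configurations one checks directly which satisfy~\eqref{eqn:seqn}. When $a,b,c$ are all positive, $a+b-c=n-1$ is the only choice giving the required equality $1=1$; solving $c=a+b-(n-1)$ then shows $c<a$ (and $c<b$), recovering condition~(1) in the form $a>c>0$, with $b>0$ automatic. The case $a=0$, $b\geq 0$ collapses to $c=b$, which is condition~(2) (the all-zero case included), and the case $a>0$, $b=0$ gives $c=a>0$, which is condition~(3). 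All remaining configurations, in particular $a,b,c>0$ with $a+b=c$ and the configuration $a,b>0$, $c=0$, violate the equality and correspondingly fail to match any of (1)--(3).

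I expect the main obstacle to be purely organizational rather than conceptual: keeping the case split exhaustive and verifying that conditions (1)--(3) are simultaneously necessary and sufficient. In particular one must confirm that the positivity of $s_{t,j-i}$ left implicit in condition~(1) is forced by the congruence rather than assumed, and that the Boolean configurations killed by $a+b\equiv c\pmod{n-1}$ are correctly recognized as impossible inputs rather than as relations one has overlooked.
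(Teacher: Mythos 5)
Your proposal is correct and takes essentially the same route as the paper: both reduce the claim to equation~\eqref{eqn:seqn} of Lemma~\ref{lem:scondition} and then exploit the congruence $s_{t,i}+s_{t,j-i}\equiv s_{t,j}\pmod{n-1}$ together with the bounds $0\le s_{t,\ell}<n-1$ to pin the left-hand side to $\{0,1\}$, finishing with a finite case analysis. Your packaging (ceilings rewritten as positivity indicators, cases organized by the zero/positive pattern of the three residues, with the forced positivity of $s_{t,j-i}$ in condition~(1) noted explicitly) is a somewhat cleaner and more symmetric organization of the paper's five-case forward argument plus separate reverse verification, but the underlying mechanism is identical.
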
 

\begin{proof}
\emph{Forward implication:}
Suppose that $i\prec j$ in $P(\lambda)$, and thus by Lemma~\ref{lem:scondition} the $s$-values satisfy~\eqref{eqn:seqn}.
We consider five cases:
\begin{itemize}
\item $s_{t,i}=0$
\item $s_{t,i}>s_{t,j}=0$
\item $s_{t,i}=s_{t,j}>0$
\item $s_{t,i}>s_{t,j}>0$
\item $s_{t,j}>s_{t,i}>0$
\end{itemize}

\emph{Case 1: $s_{t,i}=0$}.  
If $s_{t,i}=0$, then by~\eqref{eqn:seqn} we have that 
\[
\frac{s_{t,j-i}-s_{t,j}}{n-1} = \left\lceil \frac{s_{t,j-i}}{n-1}\right\rceil - \left\lceil \frac{s_{t,j}}{n-1}\right\rceil \, .
\]
Thus $\displaystyle \frac{s_{t,j-i}-s_{t,j}}{n-1}$ is equal to an integer, and the fact that $0\leq s_{t,\ell}<n-1$ implies that $s_{t,j-i}-s_{t,j}=0$.
Thus, we must have $s_{t,j-i}=s_{t,j}$.
This establishes the second condition in the theorem statement.

\emph{Case 2: $s_{t,i}>s_{t,j}=0$}. 
In this case,~\eqref{eqn:seqn} implies
\[
\frac{s_{t,i}+s_{t,j-i}}{n-1} = \left\lceil \frac{s_{t,i}}{n-1}\right\rceil + \left\lceil \frac{s_{t,j-i}}{n-1}\right\rceil \, .
\]
Thus $\displaystyle \frac{s_{t,i}+s_{t,j-i}}{n-1}$ is an integer, and again since $0\leq s_{t,\ell}<n-1$ and $0<s_{t,i}<n-1$ we have that
Thus, it is impossible to have $s_{t,i}>s_{t,j}=0$.

\emph{Case 3: $s_{t,i}=s_{t,j}>0$}. 
In this case,~\eqref{eqn:seqn} implies
\[
\frac{s_{t,j-i}}{n-1} = \left\lceil \frac{s_{t,j-i}}{n-1}\right\rceil \, .
\]
This forces $s_{t,j-i}=0$, resulting in the third condition in the theorem statement.

\emph{Case 4: $s_{t,i}>s_{t,j}>0$}.
In this case,~\eqref{eqn:seqn} implies $\displaystyle \frac{s_{t,i}+s_{t,j-i}-s_{t,j}}{n-1}$ is equal to an integer, and the fact that every $0\leq s_{t,\ell}<n-1$ implies this integer is $0$ or $1$.
Since $n-1>s_{t,i}-s_{t,j}>0$, we must have $\displaystyle \frac{s_{t,i}+s_{t,j-i}-s_{t,j}}{n-1}=1$, and also the right-hand side of~\eqref{eqn:seqn} is equal to $1$.
Thus, the first condition in the theorem statement is possible if $i\prec j$.

\emph{Case 5: $s_{t,j}>s_{t,i}>0$}.
Following the same logic as in the previous case, we must have $\displaystyle \frac{s_{t,i}+s_{t,j-i}-s_{t,j}}{n-1}=0$ and thus $s_{t,j-i}\neq 0$.
But then the right-hand side of~\eqref{eqn:seqn} is equal to $0$ while the right-hand side is equal to $1$, a contradiction.

\emph{Reverse implication:}
We verify that each of the three conditions listed in the theorem statement imply that~\eqref{eqn:seqn} is valid.

First, by equation~\eqref{eqn:rtos} we have $\displaystyle \frac{s_{t,i}+s_{t,j-i}-s_{t,j}}{n-1}\in \Z$.
Combining $n-1>s_{t,i}>s_{t,j}>0$ and the general bounds $0\leq s_{t,\ell}<n-1$ for all $\ell$, it follows that $\displaystyle \frac{s_{t,i}+s_{t,j-i}-s_{t,j}}{n-1}= 1$.
Thus, $s_{t,i}+s_{t,j-i}-s_{t,j}=n-1$.
Since $n-1>s_{t,i}-s_{t,j}>0$, we have $s_{t,j-i}=n-1-(s_{t,i}-s_{t,j})>0$, and thus
\[
\left\lceil \frac{s_{t,i}}{n-1}\right\rceil + \left\lceil \frac{s_{t,j-i}}{n-1} \right\rceil - \left\lceil \frac{s_{t,j-i}}{n-1}\right\rceil =1\, .
\] 
We conclude that equation~\eqref{eqn:seqn} holds.

Second, if $s_{t,i}=0$ and $s_{t,j}=s_{t,j-i}$, then it is immediate that~\eqref{eqn:seqn} holds.

Finally, if $s_{t,j}=s_{t,i}>0$ and $s_{j-i}=0$, then again it is immediate that~\eqref{eqn:seqn} holds.
\end{proof}

The following corollary illustrates a special case of Theorem~\ref{thm:order} that we will focus on in the remainder of this paper.
\begin{corollary}\label{cor:order}
Let $\lambda$ be a partition of $n$ where each coordinate is coprime to $n-1$, i.e. $\gcd(n-1,\lambda_t)=1$.
Then $i\prec j$ in $P(\lambda)$ if and only if $s_{t,i}> s_{t,j}>0$ for every $t$.
\end{corollary}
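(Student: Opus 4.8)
The plan is to read the corollary off Theorem~\ref{thm:order} by showing that the coprimality hypothesis makes conditions (2) and (3) of that theorem impossible, so that only condition (1) survives. The single arithmetic fact I would record first is this: since $\gcd(\lambda_t,n-1)=1$, multiplication by $\lambda_t$ is a bijection on $\Z/(n-1)\Z$, so for any index $\ell$ with $0\le \ell<n-1$ we have $s_{t,\ell}\equiv \ell\lambda_t\pmod{n-1}$, and this residue is $0$ if and only if $\ell\equiv 0\pmod{n-1}$, i.e. $s_{t,\ell}=0$ if and only if $\ell=0$. This is the heart of the argument; the rest is bookkeeping with the three cases of Theorem~\ref{thm:order}.

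For the forward implication I would take $i\prec j$ in $P(\lambda)\smallsetminus\{0\}$. By Lemma~\ref{lem:ij} this gives $i<j$, and since we are in the nonzero part of the poset $1\le i<j\le n-2$, whence also $1\le j-i\le n-3<n-1$. The arithmetic fact above, applied to the three indices $i$, $j$, and $j-i$, then guarantees that $s_{t,i}$, $s_{t,j}$, and $s_{t,j-i}$ are all strictly positive for every $t$. Now Theorem~\ref{thm:order} tells me that for each $t$ one of its three conditions holds; but condition (2) forces $s_{t,i}=0$ and condition (3) forces $s_{t,j-i}=0$, both of which are now excluded. Hence condition (1), namely $s_{t,i}>s_{t,j}>0$, holds for every $t$.

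For the reverse implication, I would assume $i<j$ together with $s_{t,i}>s_{t,j}>0$ for every $t$; this is exactly condition (1) of Theorem~\ref{thm:order} for all $t$, so the theorem returns $i\prec j$. The only point needing care --- and where I expect the real subtlety to lie --- is the hypothesis $i<j$ in this direction. It cannot be dropped: $s_{t,i}>s_{t,j}>0$ for all $t$ does not by itself determine the order of $i$ and $j$ (for example, with $\lambda=(3,3)$ and $n=6$ one has $s_{t,3}=4>s_{t,1}=3$ for both $t$, yet $3>1$). I would therefore be explicit that $i\prec j$ is understood to carry $i<j$ with it, as supplied by Lemma~\ref{lem:ij}, and that the characterization is stated for the nonzero part of $P(\lambda)$; the element $0$, which lies below everything via condition (2) of Theorem~\ref{thm:order}, is excluded and is immaterial to the antichain question motivating the corollary.
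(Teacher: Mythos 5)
Your proof is correct and takes essentially the same route as the paper's: the coprimality hypothesis forces $s_{t,\ell}\neq 0$ for all $\ell$ with $1\le \ell\le n-2$, which rules out conditions (2) and (3) of Theorem~\ref{thm:order} and leaves only condition (1). Your added care --- observing that $s_{t,\ell}=0$ exactly when $\ell=0$, excluding the zero element, and noting via your $\lambda=(3,3)$ example that the hypothesis $i<j$ must still accompany the $s$-inequalities --- refines the paper's terser two-line argument without changing its substance.
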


\begin{proof}
If $\gcd(n-1,\lambda_t)=1$, then $s_{t,i}\neq 0$ for all $i$. Thus, the second and third conditions in Theorem~\ref{thm:order} do not apply.
\end{proof}

\begin{remark}
Ehrhart-theoretic properties of simplices $\Delta_\lambda$ that satisfy the relatively prime condition in Corollary~\ref{cor:order} have been previously studied by Hibi, Higashitani, and Li~\cite[Section 3]{HibiHigashitaniLi}.
\end{remark}

We can use Corollary~\ref{cor:order} to prove the following structural result regarding $P(\lambda)$ in the case where each part of $\lambda$ is coprime to $n-1$.
\begin{theorem}\label{thm:duality}
Let $\lambda$ be a partition of $n$ such that each $\lambda_t$ is coprime to $n-1$.
Then $P(\lambda)$ is self-dual.
\end{theorem}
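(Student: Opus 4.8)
The plan is to produce an explicit order-reversing involution on the poset, so that self-duality follows immediately from the definition. The natural candidate is the \emph{complement map} $\phi(b):=(n-1)-b$, which is an involution of the index set $\{1,\dots,n-2\}$ parameterizing the nonzero lattice points of $\Pi_\lambda$ via Proposition~\ref{prop:fppparameter}. (The global minimum $0=p(0)$ lies below everything and carries no nontrivial order data; I treat it as an adjoined bottom and work with the relations on $\{1,\dots,n-2\}$, consistently with how the poset is described for these simplices.)

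First I would record the arithmetic identity that drives everything. For $b\in\{1,\dots,n-2\}$ and each $t$, equation~\eqref{eqn:rsdef} gives $b\lambda_t\equiv s_{t,b}\pmod{n-1}$, hence
\[
s_{t,b}+s_{t,(n-1)-b}\equiv b\lambda_t+\bigl((n-1)-b\bigr)\lambda_t=(n-1)\lambda_t\equiv 0 \pmod{n-1}.
\]
Both summands lie in $\{0,\dots,n-2\}$, so their sum is either $0$ or $n-1$. Here the hypothesis $\gcd(\lambda_t,n-1)=1$ is essential: it forces $s_{t,b}\neq 0$ for $b\in\{1,\dots,n-2\}$, ruling out the sum $0$ and yielding the clean identity $s_{t,(n-1)-b}=(n-1)-s_{t,b}$.

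Next I would feed this into the simplified order criterion of Corollary~\ref{cor:order}, namely that $i\prec j$ if and only if $s_{t,i}>s_{t,j}>0$ for every $t$. Applying the identity, $s_{t,\phi(j)}>s_{t,\phi(i)}$ is equivalent to $(n-1)-s_{t,j}>(n-1)-s_{t,i}$, i.e.\ to $s_{t,i}>s_{t,j}$; moreover $s_{t,\phi(i)}=(n-1)-s_{t,i}>0$ holds automatically, while the remaining positivity $s_{t,j}>0$ is again guaranteed by coprimality. Hence $i\prec j$ holds exactly when $\phi(j)\prec\phi(i)$, so $\phi$ reverses the order. Being an involutive bijection, $\phi$ is the desired anti-automorphism, and $P(\lambda)$ is self-dual.

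I do not expect a serious obstacle: the content is carried entirely by Corollary~\ref{cor:order} together with the modular identity above. The only points demanding care are bookkeeping ones — verifying that coprimality is precisely what both guarantees $s_{t,b}\neq 0$ and pins the residue sum to $n-1$ rather than $0$, and correctly locating the nontrivial order relations on $\{1,\dots,n-2\}$ (so that $\phi$ is genuinely a bijection of the relevant elements, with the minimum $0$ set aside). Matching the complement description $i\mapsto (n-1)-i$ against the explicit poset $P(8,2)$ of the preceding figure provides a useful sanity check before committing to the general argument.
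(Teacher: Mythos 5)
Your proposal is correct and is essentially the paper's own proof: the paper also takes $\phi(x)=n-1-x$ on $[n-2]$, invokes Corollary~\ref{cor:order}, and uses the identity $s_{t,i}+s_{t,n-1-i}=n-1$ (which coprimality guarantees) to see that $\phi$ is order-reversing. If anything, you supply slightly more detail than the paper, which merely asserts that identity, while you derive it from $s_{t,b}\neq 0$ ruling out the residue sum $0$.
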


\begin{proof}
We claim that $\phi:x\to n-1-x$ for $x\in [n-2]$ is an order-reversing poset isomorphism.
It is clear that $\phi$ is a bijection.
To see that $\phi$ is order-reversing, observe that by Corollary~\ref{cor:order}, we have that $i\prec j$ if and only if 
\begin{equation}\label{eq:dual1}
s_{t,i}>s_{t,j} \text{ for all } t \, .
\end{equation}
Due to the fact that $\gcd(n-1,\lambda_t)=1$, we have that
$s_{t,i}+s_{t,n-i-i}=n-1$ for all $i$ and $t$, and thus~\eqref{eq:dual1} holds if and only if
\begin{equation}\label{eq:dual1}
s_{t,n-1-j}>s_{t,n-1-i} \text{ for all } t \, .
\end{equation}
This final condition holds if and only if $n-1-j\prec n-1-i$, as desired.
\end{proof}


\section{Experimental Results}\label{sec:experiments}

\subsection{Exhaustive search of $\Delta_\lambda$ over all partitions of $n$}

The results in Section~\ref{sec:fpp}, particularly Theorem~\ref{thm:order} and Corollary~\ref{cor:order}, provide explicit tools for studying the relations in $P(\lambda)$.
Also, Corollary~\ref{cor:order} and Theorem~\ref{thm:duality} demonstrate that the condition that the parts of $\lambda$ be relatively prime to $\sum_i\lambda_i-1$ imposes additional structure on $P(\lambda)$, leading us to the following definitions.

Given a partition $\lambda$ of $n$, if $\gcd(\lambda_i,n-1)=1$ for all $i$, we say $\lambda$ \emph{satisfies the relatively prime condition}.
  Let $\Ptn(n)$ denote the set of partitions of $n$, and let $\ptn(n):=|\Ptn(n)|$.
Let $\Relprime(n)$ denote the set of partitions of $n$ that satisfy the relatively prime condition, and set $\relprime(n):=|\Relprime(n)|$.
Finally, let $\Rpac(n)$ denote the subset of $\Relprime(n)$ for which $\Delta_\lambda$ is an antichain simplex, and set $\rpac(n):=|\Rpac(n)|$.

Using SageMath~\cite{SAGE} via CoCalc.com~\cite{cocalc}, we computed $\ptn(n)$, $\relprime(n)$, and $\rpac(n)$ for all $1\leq n\leq 73$; the results are given in the table in Appendix~\ref{sec:appendix}.
Figure~\ref{fig:percentrelprime} plots the ratio $\rp(n)/\pn(n)$ for these values, and Figure~\ref{fig:percentantichainofrelprime} plots the ratio $\rpac(n)/\relprime(n)$.

\begin{figure}[h]
\begin{minipage}{0.5\textwidth}
  \centering
\resizebox{\linewidth}{!}{\input{percent_relatively_prime.pgf}}
\caption{The ratio $\rp(n)/\pn(n)$ for $1\leq n\leq 73$.}
  \label{fig:percentrelprime}
\end{minipage}%
\begin{minipage}{0.5\textwidth}
  \centering
\resizebox{\linewidth}{!}{\input{percent_antichain_of_relativelyprime.pgf}}
\caption{The ratio $\rpac(n)/\relprime(n)$ for $1\leq n\leq 73$.}
  \label{fig:percentantichainofrelprime}
\end{minipage}
\end{figure}

What follows are some observations regarding this experimental data.

\begin{enumerate}
\item Figure~\ref{fig:percentantichainofrelprime} shows that regardless of the total value of $\relprime(n)$, the ratio $\rpac(n)/\relprime(n)$ appears to be generally above $0.8$ and as $n$ grows it is clustering between $0.85$ and $0.95$.
  Thus, these experiments suggest that when $\lambda$ satisfies the relatively prime condition, it is likely that $\Delta_\lambda$ is antichain.
\item Figure~\ref{fig:percentrelprime} shows that when $n-1$ is not prime or the square of a prime, the ratio $\rp(n)/\pn(n)$ appears to be small, and thus our consideration of the relatively prime condition does not broadly apply to partitions in this case.
  However, it is immediate that when $n-1$ is prime, every partition of $n$ except for $1+(n-1)$ satisfies the relatively prime condition, and thus $\rpac(n)/\relprime(n)=\rpac(n)/(\pn(n)-1)$.
  Thus, it appears that one likely source of antichain simplices are those $\Delta_\lambda$ for which $n-1$ is prime.
\item In Figure~\ref{fig:percentantichainofrelprime}, the values of $\rpac(n)/\relprime(n)$ for $n\geq 13$ that lie on the upper hull of the data plot arise from $n$ in $\{13, 19, 31, 43, 61, 67, 73\}$.
  These are all prime numbers, and an OEIS~\cite{oeis} search finds that these values arise in three known sequences, including the sequence A040047 of those primes $p$ such that $x^3=6$ has no solution mod $p$.
\item Again in Figure~\ref{fig:percentantichainofrelprime}, the values of $\rpac(n)/\relprime(n)$ for $70\geq n\geq 20$ that lie on the lower hull of the data plot arise from $n$ in $\{20, 26, 32, 38, 44, 50, 62, 68\}$.
  These values are all of the form $6k+2$, though whether by coincidence or mathematics it is not clear.
\item When $n-1$ is a superabundant (OEIS A004394) or highly composite (OEIS A002182) number, one might expect to see particularly low numbers of relatively prime antichain simplices, which is supported by the data given in Appendix~\ref{sec:appendix}.
\end{enumerate}

At this time, the authors do not have an explanation for why the ratio $\rpac(n)/\relprime(n)$ appears to be clustering as $n$ grows, leading to the following problems.

\begin{problem}
Determine if there is a limiting value to which the sequence $\rpac(n)/\relprime(n)$ converges as $n$ increases.
Alternatively, determine if there are any connections between a liminf or limsup value for $\rpac(n)/\relprime(n)$ and the values of $n$ corresponding to subsequences achieving those values, as hinted at in the observations above.
\end{problem}

\subsection{Random sampling of simplices with one non-trivial column in Hermite normal form}

It is worthwhile to compare the results for our restricted $\Delta_\lambda$ simplices to arbitrary simplices with Hermite normal form given by
\[
  \left[
  \begin{array}{cccccc}
    0 & 1 & 0 & \cdots & 0 & a_1 \\
        0 & 0 & 1 & \cdots & 0 & a_2 \\
    \vdots & \vdots & \vdots & \ddots & \vdots & \vdots \\
    0 & 0 & 0 & \cdots & 1 & a_{d-1} \\
    0 & 0 & 0 & \cdots & 0 & n
    \end{array}\right] 
  \]
where for $i=1,\ldots,d-1$ we have $0\leq a_i<n$.
We will call a simplex of this form a \emph{one-column (n,d) Hermite normal form simplex}.
Let $OCH(n,d)$ denote the family of one-column $(n,d)$ Hermite normal form simplices, and let 
\[
OCH^+(n,d):=\{ A\in OCH(n,d): 1\leq a_i<n \text{ for all } i\} \, .
\]
Thus, $OCH^+(n,d)$ contains those simplices in $OCH(n,d)$ that are not obviously arising as lattice pyramids over simplices of smaller dimension.

There are $(n-1)^{d-1}$ simplices in $OCH^+(n,d)$.
For $67$ random choices (uniform without replacement) of $(n,d)\in \{3,\ldots,20\}\times \{3,\ldots,20\}$, we selected $n^{3}$ random samples from $OCH^+(n,d)$ and computed the resulting fraction $f(n,d)$ of antichain simplices in this sample.
We plotted the points $(n/d,f(n,d))$ in Figure~\ref{fig:fracantichain}.
It is particularly interesting that when $d$ is large relative to $n$, the percentage of antichain simplices among those sampled appears to be close to $1$, leading to the following problem.

\begin{problem}
Fix $n\geq 2$.
Is it true that the fraction of antichain simplices in $OCH^+(n,d)$ goes to $1$ as $d\to \infty$?
Alternatively, let $ac_n(d)$ denote the fraction of $\displaystyle \bigcup_{j=3}^dOCH^+(n,j)$ that are antichain simplices; what is the liminf of $ac_n(d)$ as $d\to \infty$?
\end{problem}

\begin{figure}[h]
\centering
  \centering
  \includegraphics[width=0.65\linewidth]{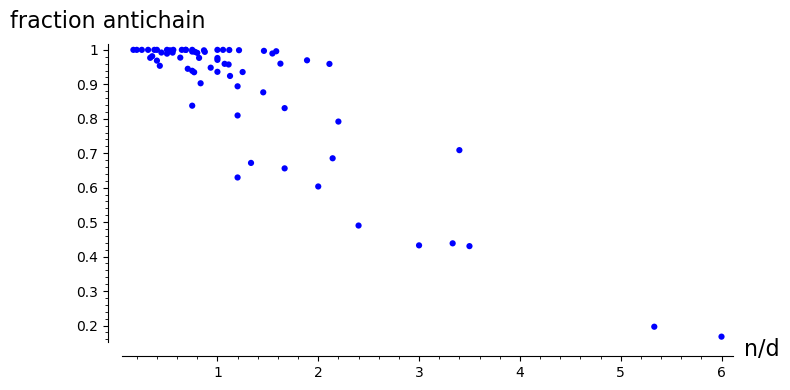}
  \caption{For $67$ random choices (uniform without replacement) of $(n,d)\in \{3,\ldots,20\}\times \{3,\ldots,20\}$, we plot the point $(n/d,f(n,d))$ where $f(n,d)$ is the fraction of antichain simplices among $n^{3}$ random samples from $OCH^+(n,d)$.}
  \label{fig:fracantichain}
\end{figure}


\section{Partitions With a Small Number of Parts}\label{sec:small}

Rather than consider all partitions of $n$ as we did for our experimental data, in this section we consider $\Delta_\lambda$ for $\lambda$ having only one or two distinct parts.
The results in this section illustrate the complications involved in computing $P(\lambda)$ even in relatively ``simple'' cases for $\lambda$ that satisfy the relatively prime condition.

\subsection{Partitions With One Distinct Part}\label{sec:onedistinct}

When $\lambda=(x,x,\ldots,x)$ has $v$ occurrences of $x$, it is immediate that $x$ is coprime to $n-1=vx-1$.
In this case, $P(\lambda)$ has a direct interpretation as a subposet of $\Z^2$.

\begin{theorem}\label{thm:onepartposet}
For $\lambda=(x,x,\ldots,x)$ with $v$ occurrences of $x$, we have that $P(\lambda)$ is isomorphic to the poset with elements 
\[
\left\{(r,p):0\leq r<x, 0\leq p<v \right\}\setminus\left\{(0,0),(x-1,v-1) \right\} 
\]
and order relation $(r,p)\prec (r',p')$ if both $p>p'$ and $r'>r$.
\end{theorem}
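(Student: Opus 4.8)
The plan is to exploit the fact that all parts of $\lambda$ equal $x$, so that the number-theoretic criterion collapses to a single condition. Since $\gcd(x,n-1)=\gcd(x,vx-1)=1$, Corollary~\ref{cor:order} applies, and all the values $s_{t,i}$ coincide with the common value $s_i:=ix \bmod (vx-1)$; coprimality forces $s_i>0$ for $1\le i\le n-2$ (it vanishes only at $i=0$). Thus for $i,j\in\{1,\dots,n-2\}$ the relation reduces to the statement that $i\prec j$ if and only if $i<j$ and $s_i>s_j$, where the positivity is automatic and the inequality $i<j$ is inherited from Lemma~\ref{lem:ij} and Theorem~\ref{thm:order}.

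First I would set up the candidate bijection $\phi(i)=(\lfloor i/v\rfloor,\, i \bmod v)$, that is, write $i=vr+p$ with $0\le r<x$ and $0\le p<v$. The key computation is the identity $ix=r(vx-1)+(px+r)$, which, since $0\le px+r<vx-1$ for every $i\le vx-2$ (equality $px+r=vx-1$ occurs only at the excluded index $i=vx-1$), shows simultaneously that $r=\lfloor i/v\rfloor$ is exactly the quotient of $ix$ by $n-1$ and that $s_i=px+r$. Consequently $\phi$ is the standard bijection from $\{0,\dots,vx-1\}$ onto all $vx$ pairs $(r,p)$; restricting to $\{0,\dots,vx-2\}$ deletes precisely the image $(x-1,v-1)$ of the omitted index $vx-1$, and deleting the minimum $i=0\leftrightarrow(0,0)$ from both sides yields a bijection between $P(\lambda)\setminus\{0\}$ and the element set claimed in the theorem.

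It then remains to verify that $\phi$ is an order isomorphism, i.e. that $i=vr+p<j=vr'+p'$ together with $s_i=px+r>s_j=p'x+r'$ holds if and only if $p>p'$ and $r'>r$. The reverse implication is immediate from the bounds: if $r'>r$ and $p>p'$ then $j-i=v(r'-r)+(p'-p)\ge v-(v-1)=1$ and $s_i-s_j=(p-p')x-(r'-r)\ge x-(x-1)=1$. The forward implication is the delicate step and the main obstacle. Setting $a=r-r'$ and $b=p-p'$, the hypotheses read $va+b<0$ and $bx+a>0$, which force $-bx<a<-b/v$; this interval is nonempty only when $b\bigl(x-\frac{1}{v}\bigr)>0$, and since $x\ge 1$ and $v\ge 1$ (the sole exception $x=v=1$ being the trivial case $n=1$) we get $x-\frac{1}{v}>0$, hence $b>0$. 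Then $a<-b/v<0$, and integrality of $a$ upgrades this to $a\le -1<0$, giving $p>p'$ and $r'>r$. This establishes the order equivalence, which together with the bijection above completes the identification of $P(\lambda)$ (minus its minimum) with the claimed subposet of $\Z^2$.
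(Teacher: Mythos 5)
Your proof is correct and follows essentially the same route as the paper's: the same division-algorithm bijection $i = vr+p$, the same key computation $s_i = px+r$ (which the paper derives by simplifying the floor expression for $s_i$ directly rather than via your one-line identity $ix = r(vx-1)+(px+r)$), and the same reduction of $i\prec j$ to ``$i<j$ and $s_i>s_j$'' via Corollary~\ref{cor:order} using $\gcd(x,vx-1)=1$. Your final verification, trapping $a=r-r'$ in the interval $(-bx,\,-b/v)$, is only a cosmetic variant of the paper's argument that the two lexicographic conditions can hold simultaneously exactly when $p_i>p_j$ and $r_j>r_i$.
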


\begin{proof}
For $1\leq i\leq vx-2$, write 
\[
i=r_i v+p_i
\]
where $0\leq r_i<x$ and $0\leq p_i<v$, but we do not have simultaneously $r_i=x-1$ and $p_i=v-1$.
Then
\begin{align*}
s_i & = ix-\left\lfloor \frac{ix}{xv-1}\right\rfloor(xv-1) \\
& = x(r_i v+p_i) - \left\lfloor \frac{(r_i v + p_i)x}{xv-1}\right\rfloor(xv-1) \\
& = xr_i v + xp_i - \left\lfloor \frac{xr_i v  -r_i + r_i + p_i x}{xv-1}\right\rfloor(xv-1) \\ 
& = xr_i v + xp_i - \left( r_i +\left\lfloor \frac{r_i + p_i x}{xv-1}\right\rfloor\right)(xv-1) \\ 
& = r_i + xp_i - \left\lfloor \frac{r_i + p_i x}{xv-1}\right\rfloor(xv-1) \\ 
& = r_i + xp_i
\end{align*}
where the final equality is a result of the bounds on $r_i$ and $p_i$ forcing the floor function to be zero.
Thus, if $i=r_i v+p_i$ and $j=r_j v +p_j$, then we have $i\prec j$ in $P(\lambda)$ if and only if $i<j$ and $s_i>s_j$, which happens if and only if the following two conditions simultaneously occur:
\begin{itemize}
\item $p_i > p_j$ or $p_i=p_j$ with $r_i>r_j$
\item $r_j >r_i$ or $r_j = r_i$ with $p_j > p_i$
\end{itemize}
The only way for both conditions to simultaneously occur is to have $p_i > p_j$ and $r_j >r_i$, and thus our proof is complete.
\end{proof}

The following corollary follows immediately.

\begin{corollary}\label{cor:onedigitswap}
The posets for $\lambda=(x,x,\ldots,x)$ where $x$ occurs $v$ times and $\lambda'=(v,v,\ldots,v)$ where $v$ occurs $x$ times are isomorphic.
\end{corollary}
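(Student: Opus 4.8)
The plan is to apply Theorem~\ref{thm:onepartposet} to each of the two partitions and then exhibit an explicit order isomorphism between the resulting combinatorial models. By Theorem~\ref{thm:onepartposet}, the poset for $\lambda=(x,\ldots,x)$ ($v$ copies) is isomorphic to the poset $Q(x,v)$ whose elements are $\{(r,p): 0\leq r<x,\ 0\leq p<v\}\setminus\{(0,0),(x-1,v-1)\}$, with $(r,p)\prec(r',p')$ if and only if $r<r'$ and $p>p'$. Since the hypotheses of Theorem~\ref{thm:onepartposet} are symmetric in the two parameters (the normalized volume $xv-1$ is unchanged under swapping $x$ and $v$), the same theorem applied to $\lambda'=(v,\ldots,v)$ ($x$ copies) shows that $P(\lambda')$ is isomorphic to $Q(v,x)$, the analogous poset with $x$ and $v$ interchanged. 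It therefore suffices to prove $Q(x,v)\cong Q(v,x)$.

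Before committing to a map, I would note the obvious candidate and why it must be modified. The coordinate transpose $(r,p)\mapsto(p,r)$ is a bijection of the underlying point sets that carries the two excluded points to the two excluded points, but it \emph{reverses} the order: the defining inequalities $r<r'$ and $p>p'$ become $p<p'$ and $r>r'$. Thus the transpose is an anti-isomorphism rather than an isomorphism. This is consistent with Theorem~\ref{thm:duality}, which guarantees that $Q(x,v)$ is self-dual; an isomorphism and an anti-isomorphism differ by a self-duality.

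The map I would use instead is the point reflection $\psi\colon Q(x,v)\to Q(v,x)$ given by $\psi(r,p)=(v-1-p,\ x-1-r)$. The verification consists of three routine checks. For the underlying set, the bounds $0\leq r<x$ and $0\leq p<v$ become $0\leq v-1-p<v$ and $0\leq x-1-r<x$, which are exactly the defining bounds for $Q(v,x)$. For the excluded points, $\psi$ sends $(0,0)\mapsto(v-1,x-1)$ and $(x-1,v-1)\mapsto(0,0)$, so the forbidden pair of $Q(x,v)$ is carried bijectively onto the forbidden pair of $Q(v,x)$; moreover $\psi$ is invertible, its inverse being the analogous reflection $(r,p)\mapsto(x-1-p,v-1-r)$. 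For the order, the inequality $r<r'$ is equivalent to $x-1-r>x-1-r'$ and $p>p'$ is equivalent to $v-1-p<v-1-p'$, which are precisely the two inequalities defining $\prec$ in $Q(v,x)$ for the images $\psi(r,p)$ and $\psi(r',p')$. Hence $\psi$ is an order isomorphism.

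Because the structural work is already carried out in Theorem~\ref{thm:onepartposet}, I do not anticipate a genuine obstacle. The only point requiring care is the one highlighted above: a bare coordinate swap is order-reversing, so one must compose it with the reflection $r\mapsto x-1-r$, $p\mapsto v-1-p$ to obtain an honest isomorphism. Once $\psi$ is written down, all three checks follow immediately from the bounds on $r$ and $p$.
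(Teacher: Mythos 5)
Your proof is correct and takes the same route as the paper, which gives no argument beyond asserting that the corollary ``follows immediately'' from Theorem~\ref{thm:onepartposet}. You supply the detail the paper glosses over — and get it right: the bare transpose $(r,p)\mapsto(p,r)$ is indeed only an anti-isomorphism of the two models (consistent with the self-duality from Theorem~\ref{thm:duality}), and your point reflection $\psi(r,p)=(v-1-p,\,x-1-r)$ correctly fixes this, with all three checks (bounds, excluded points, order preservation) verifying as you state.
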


Corollary~\ref{cor:onedigitswap} is interesting because the two lattice simplices corresponding to $\lambda$ and $\lambda'$ are in different dimensions.
As an aside, we remark that the order on the lattice points within a rectangular grid given in Theorem~\ref{thm:onepartposet} corresponds to the reflexive closure of the direct product of two strict total orders.

\begin{example}
Figures~\ref{fig:46} and~\ref{fig:64} show the Hasse diagrams of the posets $P(4,4,4,4,4,4)$ and $P(6,6,6,6)$, respectively, embedded in $\Z^2$ as described in Theorem~\ref{thm:onepartposet}.
This illustrates the isomorphism obtained by switching the roles of $x$ and $v$.
\begin{figure}[ht]
\centering
\begin{minipage}{.5\textwidth}
  \centering
\resizebox{0.5\linewidth}{!}{\input{posetfig46.pgf}}
\caption{$P(4,4,4,4,4,4)$.}
  \label{fig:46}
\end{minipage}%
\begin{minipage}{.5\textwidth}
  \centering
\resizebox{0.8\linewidth}{!}{\input{posetfig64.pgf}}
\caption{$P(6,6,6,6)$.}
  \label{fig:64}
\end{minipage}
\end{figure}
\end{example}

\subsection{Partitions With Two Distinct Parts}\label{sec:twodistinct}

The situation for $\lambda$ with two distinct parts is significantly more complicated than for one distinct part.
Rather than consider arbitrary pairs of distinct parts for $\lambda$, we will consider the special case where one of the parts is a multiple of the other.
Our main result of this subsection is Theorem~\ref{thm:conematrix}, and we give an example illustrating how it can be applied to construct our posets.
Specifically, we use the following setup.

\begin{setup}\label{setup:xax}
Let $\lambda=(x,\dots,x,ax,\dots,ax)$ with $3\leq a\leq x$ where the multiplicity of $x$ is $ua+v$ and the multiplicity of $ax$ is $v-(u+1)$.
This places an implicit restriction on the values of $u$ and $v$ as follows:
\begin{align*}
0&\leq u\leq a-3\\
u+2&\leq v\leq \min\left(a-1,\frac{a(x-1)}{x}\right) \, .
\end{align*} 

Let $n=|\lambda|$, so that 
\begin{equation}\label{eqn:n-1}
n-1=x[(a+1)(v-1)+1] -1=(xa(v-1))+xv-1 \, .
\end{equation}
\end{setup}

For $0\leq i\leq n-2$, define as usual
\[
s_{1,i}:=ix-(n-1)\left\lfloor\frac{ix}{n-1}\right\rfloor \text{ and } \, \,
s_{2,i}:=iax-(n-1)\left\lfloor\frac{iax}{n-1}\right\rfloor \, .
\]

As in the proof of Theorem~\ref{thm:onepartposet}, our analysis will require us to represent $i$ as a quotient with remainder.
In this case, we will use a combination of two quotients-with-remainder from applying the division algorithm twice.
Observing that $n/x=(a+1)(v-1)+1$, we write $0\leq i\leq n-2$ as 
\begin{equation}\label{eqn:newi}
i=\frac{n}{x}r_i+(v-1)p_i+q_i
\end{equation}
subject to the following inequalities:
\begin{equation}\label{eqn:rbound}
0\leq r_i<x
\end{equation}
with
\[
0 \leq (v-1)p_i+q_i<\displaystyle n/x =(a+1)(v-1)+1\, ,
\]
and
\begin{equation}\label{eqn:pbound}
0\leq p_i< a+2
\end{equation}
with
\begin{equation}\label{eqn:qbound} 
0 \leq q_i< v-1  \, ,
\end{equation}
where $p_i=a+1$ implies $q_i=0$.

Our first goal is to express $s_{1,i}$ and $s_{2,i}$ as explicit functions of $r_i,p_i,q_i,x,a$, and $v$.

\begin{lemma} 
$s_{1,i}=r_i+x\big[(v-1)p_i+q_i\big]$.
\end{lemma}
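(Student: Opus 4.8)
The plan is to follow the computation in the proof of Theorem~\ref{thm:onepartposet} almost verbatim, the only new feature being the two-stage division algorithm used in~\eqref{eqn:newi}. First I would abbreviate $N := n/x = (a+1)(v-1)+1$, so that by~\eqref{eqn:n-1} we have $n-1 = xN-1$, and collapse the last two terms of~\eqref{eqn:newi} into a single remainder $m_i := (v-1)p_i + q_i$. The bounds~\eqref{eqn:pbound} and~\eqref{eqn:qbound}, together with the constraint $0 \le (v-1)p_i + q_i < n/x$ recorded in Setup~\ref{setup:xax}, give exactly $0 \le m_i \le N-1$, so that $i = N r_i + m_i$ with $0 \le r_i \le x-1$ and $0 \le m_i \le N-1$. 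This reduces the situation to the same shape as the one-distinct-part case, now with $N$ playing the role that $v$ played there.

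Next I would substitute this into the definition of $s_{1,i}$. Writing $ix = xN r_i + x m_i$ and using $xN r_i = r_i(xN-1) + r_i$, the numerator of the floor becomes $r_i(xN-1) + (r_i + x m_i)$, whence $\left\lfloor \frac{ix}{xN-1}\right\rfloor = r_i + \left\lfloor \frac{r_i + x m_i}{xN-1}\right\rfloor$. Granting for the moment that the residual floor vanishes, the claim follows by the same cancellation as in Theorem~\ref{thm:onepartposet}: one computes $s_{1,i} = ix - (xN-1)r_i = (xN r_i + x m_i) - r_i(xN-1) = r_i + x m_i = r_i + x\big[(v-1)p_i + q_i\big]$.

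The one step requiring care---the main obstacle---is showing that $\left\lfloor \frac{r_i + x m_i}{xN-1}\right\rfloor = 0$, i.e. that $0 \le r_i + x m_i < xN-1$. The lower bound is immediate. For the upper bound, the ranges $0 \le r_i \le x-1$ and $0 \le m_i \le N-1$ yield $r_i + x m_i \le (x-1) + x(N-1) = xN-1$, so a priori only the non-strict inequality holds. Here I would observe that equality $r_i + x m_i = xN-1$ can occur only at the corner $r_i = x-1$, $m_i = N-1$ (by uniqueness of base-$x$ digits), and that this corner corresponds to $i = N(x-1) + (N-1) = xN-1 = n-1$. Since we assume $0 \le i \le n-2$, this extremal value is excluded, the inequality is strict, and the floor is indeed $0$. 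This is the precise analogue of the role played by the excluded element $(x-1,v-1)$ in Theorem~\ref{thm:onepartposet}, and it is the only place where the hypothesis $i \le n-2$---equivalently, the omission of the top lattice point of $\Pi_\lambda$---is needed.
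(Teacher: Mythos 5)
Your proposal is correct and follows essentially the same route as the paper's proof: both substitute the two-stage division $i=\frac{n}{x}r_i+(v-1)p_i+q_i$ into the definition of $s_{1,i}$, split $r_i$ off from the floor term, and show the residual floor $\left\lfloor \frac{r_i+x[(v-1)p_i+q_i]}{n-1}\right\rfloor$ vanishes because equality in the bound $r_i+x\big[(v-1)p_i+q_i\big]\leq n-1$ occurs only at the corner ($r_i=x-1$ with $(v-1)p_i+q_i$ maximal), which corresponds to $i=n-1$ and is excluded by $i\leq n-2$. Your packaging of the remainder as $m_i$ and the appeal to uniqueness of division with remainder are cosmetic differences only.
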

\begin{proof}
By~\eqref{eqn:newi}, since $ix=nr_i+x\big[(v-1)p_i+q_i\big]$, we have that 
\begin{align*} 
s_{1,i}&=nr_i+x\big[(v-1)p_i+q_i\big]-(n-1)\left\lfloor\frac{nr_i+x\big[(v-1)p_i+q_i\big]}{n-1}\right\rfloor\\
&=r_i+x\big[(v-1)p_i+q_i\big]-(n-1)\left(-r_i+\left\lfloor\frac{nr_i+x\big[(v-1)p_i+q_i\big]}{n-1}\right\rfloor\right)\\
&=r_i+x\big[(v-1)p_i+q_i\big]-(n-1)\left\lfloor\frac{r_i+x\big[(v-1)p_i+q_i\big]}{n-1}\right\rfloor \, .
\end{align*}
Observe that equations~\eqref{eqn:n-1}, \eqref{eqn:qbound}, \eqref{eqn:pbound}, and~\eqref{eqn:rbound} imply that
\[
0\leq r_i+x\big[(v-1)p_i+q_i\big]\leq x-1 + x(a+1)(v-1)=n-1 \, ,
\] 
with equality only if $r_i=x-1$ and $p_i=a+1$ simultaneously. 
But in this case, we have that 
\[
ix=(x-1)n+x(a+1)(v-1)=(n-1)x \, ,
\] 
a contradiction with $i\leq n-2$. 
Thus the right hand floor term is zero in our expression for $s_{1,i}$, and the result follows.\end{proof}

Define the function 
\[
f(i):=\,ar_i-(xv-1)p_i+xaq_i \, ,
\] 
and associated set partition $[n-2]=\uplus_k F_k$ given by 
\[
F_k:=\left\{i: k= -\left\lfloor \frac{f(i)}{n-1}\right\rfloor \right\} \, .
\]

\begin{lemma} 
If $i$ is in $F_k$, then $ s_{2,i}=f(i)+k(n-1)$.
\end{lemma}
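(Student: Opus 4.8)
The plan is to mimic the structure of the preceding lemma for $s_{1,i}$, exploiting the definition of $f$ and the set partition $\{F_k\}$ to track the correct floor value. By definition, we have
\[
s_{2,i} = iax - (n-1)\left\lfloor \frac{iax}{n-1}\right\rfloor \, ,
\]
so the entire content of the lemma reduces to showing that the integer multiple of $(n-1)$ being subtracted equals $iax - f(i) - k(n-1)$, equivalently that
\[
\left\lfloor \frac{iax}{n-1}\right\rfloor = \frac{iax - f(i)}{n-1} - k \, .
\]
First I would substitute the decomposition~\eqref{eqn:newi}, namely $i = \frac{n}{x}r_i + (v-1)p_i + q_i$, into $iax$ and expand, keeping in mind that $n/x = (a+1)(v-1)+1$. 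Multiplying through by $ax$ and organizing terms, I expect $iax$ to split into a large multiple of $(n-1)$ plus a remainder that matches $f(i) = ar_i - (xv-1)p_i + xaq_i$. The key arithmetic identity to verify is that $iax - f(i)$ is exactly divisible by $(n-1)$; this is the computational heart of the lemma and I would carry it out using the explicit formula $n - 1 = xa(v-1) + xv - 1$ from~\eqref{eqn:n-1}.

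Once divisibility of $iax - f(i)$ by $(n-1)$ is established, write $iax - f(i) = (n-1)M$ for some integer $M$. Then
\[
\frac{iax}{n-1} = M + \frac{f(i)}{n-1} \, ,
\]
so that $\left\lfloor \frac{iax}{n-1}\right\rfloor = M + \left\lfloor \frac{f(i)}{n-1}\right\rfloor = M - k$, using the defining property that $i \in F_k$ means $k = -\left\lfloor f(i)/(n-1)\right\rfloor$. Substituting back yields
\[
s_{2,i} = iax - (n-1)(M-k) = \big(iax - (n-1)M\big) + k(n-1) = f(i) + k(n-1) \, ,
\]
which is precisely the claimed formula. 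The structure is thus: (1) reduce to the floor identity, (2) prove divisibility of $iax - f(i)$ by $n-1$, (3) invoke the $F_k$ definition to pin down the floor of $f(i)/(n-1)$, and (4) reassemble.

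The main obstacle I anticipate is step (2), the divisibility computation. Unlike the $s_{1,i}$ lemma, where the analogous remainder $r_i + x[(v-1)p_i + q_i]$ could be bounded directly between $0$ and $n-1$ to force the floor term to vanish, here $f(i)$ carries a negative coefficient $-(xv-1)$ on $p_i$ and need not lie in $[0, n-1)$; indeed the whole point of partitioning $[n-2]$ into the sets $F_k$ is that $f(i)/(n-1)$ can land in different integer intervals. Consequently I cannot simply bound $f(i)$ away from a floor correction, and must instead genuinely verify the congruence $iax \equiv f(i) \pmod{n-1}$ by algebraic manipulation. I expect this to follow from reducing $n \equiv 1 \pmod{n-1}$ together with the relation $xv - 1 \equiv -xa(v-1) \pmod{n-1}$ coming from~\eqref{eqn:n-1}, which should convert the $\frac{n}{x}r_i \cdot ax$ term and the $(v-1)p_i \cdot ax$ term into the $ar_i$ and $-(xv-1)p_i$ contributions of $f(i)$ respectively; the $q_i$ term passes through as $xaq_i$ unchanged. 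Verifying these congruences carefully, rather than any conceptual difficulty, is where the work lies.
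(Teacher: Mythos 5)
Your proposal is correct and follows essentially the same route as the paper: both reduce the lemma to the congruence $iax \equiv f(i) \pmod{n-1}$, verify it by expanding $i=\frac{n}{x}r_i+(v-1)p_i+q_i$ and using $n-1=xa(v-1)+xv-1$ (the paper computes $axi-f(i)=(n-1)(ar_i+p_i)$ exactly, matching your anticipated divisibility with $M=ar_i+p_i$), and then invoke the defining property of $F_k$ to pin down the floor. The only cosmetic difference is that the paper phrases the argument via fractional parts $\left\{iax/(n-1)\right\}=\left\{f(i)/(n-1)\right\}$ rather than your floor-function bookkeeping.
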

\begin{proof} 
Observe that 
\[
\frac{s_{2,i}}{n-1}=\left\{\frac{iax}{n-1}\right\} \, .
\] 
Further, notice that using~\eqref{eqn:n-1} we have
\begin{align*}
axi-f(i)&=axi-\left(ar_i-(xv-1)p_i+xaq_i\right) \\
&=a(n-1)r_i+p_i(ax(v-1)+(xv-1))\\
&=(n-1)(ar_i+p_i) \, ,
\end{align*}
 an integer multiple of $n-1$, so that 
\[
\left\{\frac{iax}{n-1}\right\}=\left\{\frac{f(i)}{n-1}\right\} \, .
\]

It follows that 
\[
s_{2,i}=(n-1)\left(\frac{f(i)}{n-1}-\left\lfloor\frac{f(i)}{n-1}\right\rfloor\right)=f(i)+k(n-1) \, .
\]
since $i\in F_k$.
\end{proof}

\begin{theorem}\label{thm:conematrix}
Suppose that $n-1$ is coprime to both $x$ and $a$, and let $\ell\in \Z$.
If $i\in F_k$ and $j\in F_{k+\ell}$, then $i\prec j$ if and only if $(p_j-p_i,q_j-q_i,r_j-r_i)$  lies in the open polyhedral cone defined by $Cx>(\ell(n-1),0,0)^T$, where $C$ is the matrix
\[C:=\begin{bmatrix} 
xv-1 & -ax&-a \\
1-v&-1&0\\
0&0&1
\end{bmatrix}.\]
 \end{theorem}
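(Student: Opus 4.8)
The plan is to reduce everything to Corollary~\ref{cor:order}, which applies precisely because we have assumed $n-1$ is coprime to both $x$ and $ax$ (the latter since $n-1$ is coprime to both $x$ and $a$). By that corollary, $i\prec j$ holds if and only if $i<j$ together with the two strict inequalities $s_{1,i}>s_{1,j}$ and $s_{2,i}>s_{2,j}$. The strategy is to rewrite each of these three conditions as a linear inequality in the coordinate differences $(p_j-p_i, q_j-q_i, r_j-r_i)$, using the explicit formulas for $s_{1,i}$ and $s_{2,i}$ established in the two preceding lemmas, and then verify that the resulting system is exactly the row-by-row reading of $Cx>(\ell(n-1),0,0)^T$.

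First I would handle $s_{2,i}>s_{2,j}$. Since $i\in F_k$ and $j\in F_{k+\ell}$, the second lemma gives $s_{2,i}=f(i)+k(n-1)$ and $s_{2,j}=f(j)+(k+\ell)(n-1)$. Thus $s_{2,i}>s_{2,j}$ is equivalent to $f(i)-f(j)>\ell(n-1)$, i.e. $f(j)-f(i)<-\ell(n-1)$; expanding $f$ via its definition $f(i)=ar_i-(xv-1)p_i+xaq_i$, the difference $f(j)-f(i)$ is the linear form $a(r_j-r_i)-(xv-1)(p_j-p_i)+xa(q_j-q_i)$, so the condition becomes $(xv-1)(p_j-p_i)-xa(q_j-q_i)-a(r_j-r_i)>\ell(n-1)$, which is precisely the first row of $Cx>(\ell(n-1),0,0)^T$. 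Next I would handle $s_{1,i}>s_{1,j}$. By the first lemma, $s_{1,i}=r_i+x[(v-1)p_i+q_i]$, so $s_{1,i}>s_{1,j}$ reads $r_i-r_j+x(v-1)(p_i-p_j)+x(q_i-q_j)>0$. The main subtlety here is that this single inequality must be split to recover the second and third rows of $C$, which are $(1-v)(p_j-p_i)-(q_j-q_i)>0$ and $(r_j-r_i)>0$; so I would show that, given the bound constraints~\eqref{eqn:rbound}--\eqref{eqn:qbound} on the coordinates, the condition $s_{1,i}>s_{1,j}$ together with $i<j$ forces these two separate strict inequalities, and conversely that those two inequalities imply $s_{1,i}>s_{1,j}$.

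The hard part will be this last splitting step: disentangling the ordering of the ``digits'' in the mixed-radix expansion~\eqref{eqn:newi}, exactly as in the proof of Theorem~\ref{thm:onepartposet}. There, the two total-order conditions on $(r,p)$ combined to force $p_i>p_j$ and $r_j>r_i$ simultaneously; here the analogous mechanism must produce the rows $(1-v)(p_j-p_i)>(q_j-q_i)$ and $r_j>r_i$. The key observation I would exploit is that the bounds $0\le q<v-1$ and $0\le r<x$ mean each coefficient block in $s_{1,i}$ occupies a disjoint ``scale,'' so that $s_{1,i}>s_{1,j}$ cannot be achieved by trading between scales; the leading term $x(v-1)p$ dominates, the middle term $xq$ sits below it, and the unit $r$ is lowest. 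I would also need to confirm that the condition $i<j$ is subsumed by (or consistent with) the cone membership, so that the final characterization is clean and does not carry $i<j$ as an extra hypothesis; I expect this to follow because strict positivity of $r_j-r_i$ in the third row, combined with the first-lemma formula, already encodes the relevant ordering. Once all three rows are matched and the equivalences in both directions are checked, assembling them into the single matrix inequality $Cx>(\ell(n-1),0,0)^T$ is immediate.
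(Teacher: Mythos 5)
Your proposal follows essentially the same route as the paper's proof: invoke Corollary~\ref{cor:order} (valid since coprimality of $n-1$ with both $x$ and $a$ gives $\gcd(n-1,ax)=1$), translate $s_{2,i}>s_{2,j}$ via the $F_k$-lemma into the first row of $C$ (your sign is the correct one matching the matrix), and convert the pair ($i<j$, $s_{1,i}>s_{1,j}$) into the second and third rows by the digit-scale argument, with $r_j>r_i$ subsuming $i<j$ exactly as you predict. One small caution: the separation is into two scales only, $r$ versus $x\bigl[(v-1)p+q\bigr]$, so one extracts the combined inequality $(v-1)(p_j-p_i)+(q_j-q_i)<0$ rather than separate orderings of $p$ and $q$; since your stated target rows already use the combined quantity, the plan goes through as written.
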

 \begin{proof}
 Summarizing our results from the above lemmas,
\begin{enumerate}
\item $i<j$ if and only if $r_i<r_j$ or $r_i=r_j$ and $(v-1)p_i+q_i<(v-1)p_j+q_j$.\smallskip

\item $s_{1,i}>s_{1,j}$ if and only if $r_i-r_j>x\big[(v-1)(p_j-p_i)+(q_j-q_i)\big]$. If $i<j$, then $r_i-r_j<0$ or $r_i-r_j=0$ and $(v-1)(p_j-p_i)+q_j-q_i>0$. Since $-(x-1)\leq r_i-r_j\leq x-1$, we see that for $i<j$, we have $s_{1,i}>s_{1,j}$ if and only if $r_i<r_j$ and $\big[(v-1)p_i+q_i\big] >\big[(v-1)p_j+q_j\big]$. \smallskip
\item If $i$ is in $F_k$ and $j$ is in $F_{k+\ell}$, then $s_{2,i}>s_{2,j}$ if and only if $f(i)>f(j)+\ell(n-1)$, i.e., if and only if $
      (xv-1)(p_j-p_i)-ax(q_j-q_i)+a(r_j-r_i)> \ell(n-1)$.
\end{enumerate}

Notice that these conditions correspond to affine half-spaces and are simultaneously satisfied exactly when $(p_j-p_i,q_j-q_i,r_j-r_i)\in \Z^3$  lies in the open polyhedral cone defined by the matrix equation $Cx>(\ell(n-1),0,0)^T$.
\end{proof}

The following proposition shows that there are a limited number of values of $k$ for which $F_k$ is non-empty.

\begin{proposition}
$\displaystyle [n-2]=F_0\uplus F_1\uplus F_2$
\end{proposition}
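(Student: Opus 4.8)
The plan is to establish the two-sided bound $-2(n-1)\le f(i)<n-1$ for every index $i$ with $0\le i\le n-2$. Granting this, $f(i)/(n-1)\in[-2,1)$, so $\lfloor f(i)/(n-1)\rfloor\in\{-2,-1,0\}$ and therefore $-\lfloor f(i)/(n-1)\rfloor\in\{0,1,2\}$. Since each $i$ lies in exactly the one set $F_k$ with $k=-\lfloor f(i)/(n-1)\rfloor$, the sets $\{F_k\}$ automatically partition $[n-2]$, and the bound shows $F_k=\emptyset$ for $k\notin\{0,1,2\}$. This is precisely the claimed decomposition $[n-2]=F_0\uplus F_1\uplus F_2$.

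To bound $f(i)=ar_i-(xv-1)p_i+xaq_i$, I would first note that the representation~\eqref{eqn:newi} together with~\eqref{eqn:rbound},~\eqref{eqn:pbound}, and~\eqref{eqn:qbound} confines $(r_i,p_i,q_i)$ to the integer box $0\le r_i\le x-1$, $0\le p_i\le a+1$, $0\le q_i\le v-2$. The remaining joint restrictions (the bound on $(v-1)p_i+q_i$ and the implication $p_i=a+1\Rightarrow q_i=0$) only shrink the feasible set, so maximizing and minimizing the affine function $f$ over the larger box yields valid upper and lower bounds. Since $f$ is increasing in $r_i$ and $q_i$ and decreasing in $p_i$, its box-maximum occurs at $(x-1,0,v-2)$ and its box-minimum at $(0,a+1,0)$, giving $f(i)\le a(xv-x-1)$ and $f(i)\ge-(xv-1)(a+1)$.

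The two inequalities then follow from the identity $n-1=xa(v-1)+xv-1$ recorded in~\eqref{eqn:n-1}. For the upper estimate, $n-1-a(xv-x-1)=xv-1+a>0$, so $f(i)<n-1$. For the lower estimate, a short expansion gives $2(n-1)-(xv-1)(a+1)=ax(v-2)+xv+a-1$, which is positive because Setup~\ref{setup:xax} forces $v\ge u+2\ge 2$, whence $ax(v-2)\ge 0$ and $xv+a-1>0$; thus $f(i)>-2(n-1)$. Combining the two gives the required sandwich and completes the argument.

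The computations here are elementary, so the only step needing genuine care is the reduction to the box: one must observe that discarding the joint inequalities strictly enlarges the admissible set of triples $(r_i,p_i,q_i)$, so that the box extrema bound $f(i)$ from both sides for the actual indices. That reduction is what makes the problem tractable, since the true feasible region carries an awkward coupling between $p_i$ and $q_i$; everything after it is the routine verification of the two sign inequalities, each of which rests only on $v\ge 2$ and the positivity of $x$ and $a$.
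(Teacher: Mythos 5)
Your proof is correct and follows essentially the same route as the paper: both bound the affine function $f(i)=ar_i-(xv-1)p_i+xaq_i$ by relaxing the joint constraints to the coordinate box $0\le r_i\le x-1$, $0\le p_i\le a+1$, $0\le q_i\le v-2$, evaluating at the extreme corners, and then using the identity $n-1=xa(v-1)+xv-1$ together with $v\ge 2$ to verify the sandwich $-2(n-1)<f(i)<n-1$. Your explicit remark that discarding the coupling between $p_i$ and $q_i$ only enlarges the feasible set is a nice clarification of a step the paper performs implicitly, but it is the same argument.
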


\begin{proof}
We show that $-2(n-1)<f(i)<n-1$ for every $i\in [n-2]$, from which the result follows.
To prove $-2(n-1)<f(i)$, we observe the following, using equations~\eqref{eqn:rbound}, \eqref{eqn:qbound}, and~\eqref{eqn:pbound} for the first inequality and the fact that (by definition) $v\geq 2$ and $a\geq 1$ for the second inequality:
\begin{align*}
f(i)+2(n-1) &= ar_i-p_i(xv-1)+xaq_i+2(xa(v-1)+xv-1) \\
& = a(r_i+xq_i+2x(v-1))-(xv-1)(p_i-2) \\
 &\geq  a(2x(v-1))-(xv-1)(a-1) \\
& = (v-2)ax+xv+a-1 \\
& \geq 0
\end{align*}

To prove the $f(i)<n-1$, we observe that using the same inequalities as before together with~\eqref{eqn:n-1} we have:
\begin{align*}
n-1-f(i) & = n-1 - ar_i+(xv-1)p_i-xaq_i \\
& \geq n-1 -a(x-1)-xa(v-2) \\
& = xa(v-1)+xv-1-a(x-1)-xa(v-2) \\
& = a-1+xv \\
& \geq 0
\end{align*}
\end{proof}

To illustrate Theorem~\ref{thm:conematrix} in a specific application, let $v=2$, so that $\lambda=(x,x,ax)\in\Z^3$.
Thus, $n=x(a+2)$ and $i=(a+2)r_i+p_i$, with $0\leq p_i<a+2$, and $q_i=0$ for all $i$.
In this case, Theorem \ref{thm:conematrix} is equivalent to $i\prec j$ if and only if $i\in F_s$, $j\in F_{s+\ell}$, and 
\[\begin{bmatrix}
   2x-1 & -ax & -a \\
   -1 & -1 & 0\\
   0&0&1
   \end{bmatrix}
   \begin{bmatrix}
   p_j-p_i\\
   0\\
   r_j-r_i
   \end{bmatrix}
   =
   \begin{bmatrix}
   (2x-1)(p_j-p_i)-a(r_j-r_i) \\
   p_i-p_j\\
   r_j-r_i
   \end{bmatrix}
   >
   \begin{bmatrix}
   \ell(n-1)\\
   0\\
   0
   \end{bmatrix}
.\]
The following two propositions are used in Example~\ref{fppExample} that follows.

\begin{proposition}For $v=2$, \[
r_i+r_{n-1-i}=x-1\quad\text{ and }\quad p_i+p_{n-1-i}=a+1.\]
\end{proposition}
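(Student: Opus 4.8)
The plan is to work directly from the representation $i = \frac{n}{x}r_i + (v-1)p_i + q_i$ given in~\eqref{eqn:newi}, specialized to $v=2$. When $v=2$ we have $q_i = 0$ for all $i$ (since~\eqref{eqn:qbound} forces $0 \leq q_i < v-1 = 1$), so the representation collapses to $i = (a+2)r_i + p_i$ with $0 \leq r_i < x$ and $0 \leq p_i \leq a+1$, subject to the constraint that $p_i = a+1$ implies we do not simultaneously have the maximal $r_i$. The key observation is that $n = x(a+2)$, so $(a+2)$ is exactly the radix for the $r$-digit, and $p_i$ is the residue of $i$ modulo $(a+2)$ while $r_i = \lfloor i/(a+2)\rfloor$.

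First I would compute $p_{n-1-i}$ and $r_{n-1-i}$ by writing $n - 1 - i = x(a+2) - 1 - \big((a+2)r_i + p_i\big)$. The cleanest route is to split into the two cases $p_i = 0$ and $p_i > 0$, since the ``borrow'' in the subtraction behaves differently. When $1 \leq p_i \leq a+1$, I would rearrange to
\[
n - 1 - i = (a+2)(x - 1 - r_i) + \big((a+2) - 1 - p_i\big) = (a+2)(x-1-r_i) + (a+1-p_i),
\]
and then verify that the two digits $x-1-r_i$ and $a+1-p_i$ satisfy the bounds~\eqref{eqn:rbound} and~\eqref{eqn:pbound} required of a valid representation; by uniqueness of the representation this forces $r_{n-1-i} = x-1-r_i$ and $p_{n-1-i} = a+1-p_i$, which gives both claimed identities at once. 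The case $p_i = 0$ needs separate handling because then $a+1-p_i = a+1$ would sit at the boundary of the allowed range, and I would check that the borrow instead produces $r_{n-1-i} = x - r_i$, $p_{n-1-i} = \ldots$; I expect that the excluded configuration (the missing lattice point corresponding to $i=0$ or $i=n-1$) is exactly what makes the endpoint bookkeeping consistent, so that the stated identities still hold on the range $1 \leq i \leq n-2$ under the standing Setup.

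The main obstacle will be precisely this boundary bookkeeping: the representation in~\eqref{eqn:newi} has an excluded configuration (the constraint after~\eqref{eqn:qbound}, and the exclusion of the simultaneous maximum used in the proof of the preceding lemma), and I must make sure the involution $i \mapsto n-1-i$ maps the valid index set $[n-2]$ to itself without landing on an excluded digit-string. I would therefore verify that $1 \leq i \leq n-2$ implies $1 \leq n-1-i \leq n-2$, and confirm that the digit pair I produce for $n-1-i$ never hits the forbidden case, appealing to the same contradiction argument ($ix = (n-1)x$ when both digits are maximal) used in the lemma computing $s_{1,i}$. Once the digit identities $r_i + r_{n-1-i} = x-1$ and $p_i + p_{n-1-i} = a+1$ are established, the proposition follows immediately, and these identities are the natural $v=2$ analogue of the relation $s_{t,i} + s_{t,n-1-i} = n-1$ exploited in Theorem~\ref{thm:duality}, which suggests this proposition is the combinatorial heart of a self-duality statement for these posets.
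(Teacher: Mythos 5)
Your proposal is correct, but it takes a genuinely different route from the paper's proof. The paper never computes the digits of $n-1-i$ at all: it adds the two representations, writing $n = i + (n-1-i) + 1 = (a+2)(r_i + r_{n-1-i}) + (p_i + p_{n-1-i}) + 1 = (a+2)x$, and then argues by divisibility that $p_i + p_{n-1-i} \equiv a+1 \pmod{a+2}$, so that the bound $0 \leq p_j < a+2$ forces $p_i + p_{n-1-i} = a+1$, after which $r_i + r_{n-1-i} = x-1$ drops out. You instead perform the subtraction $n-1-i = (a+2)(x-1-r_i) + (a+1-p_i)$ and appeal to uniqueness of quotient and remainder; this is more constructive, since it identifies each digit of $n-1-i$ individually rather than only the two sums, at the cost of the borrow bookkeeping you flag. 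On that bookkeeping, one correction: your case split at $p_i = 0$ is unnecessary, and the tentative formula $r_{n-1-i} = x - r_i$ you pencil in for that case cannot be right, since it would force $p_{n-1-i} = -1$. When $v=2$ the value $p = a+1$ is a perfectly legal remainder: the constraint from~\eqref{eqn:pbound} together with the bound $(v-1)p_i + q_i < a+2$ gives $0 \leq p_i \leq a+1$, and the side condition of Setup~\ref{setup:xax} that $p_i = a+1$ forces $q_i = 0$ is vacuous here because $q_i = 0$ always. Hence $a+1-p_i \in \{0,\dots,a+1\}$ is a valid remainder for every $p_i$, and the single identity $n-1-i = (a+2)(x-1-r_i) + (a+1-p_i)$ covers all cases at once. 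The only genuinely excluded digit pair is $(r,p) = (x-1,\,a+1)$, which corresponds to $i = n-1$; it never arises as the digit string of $n-1-i$ for $1 \leq i \leq n-2$, since that would require $r_i = p_i = 0$, i.e.\ $i = 0$. With that simplification your argument closes cleanly, and your closing observation is apt: these digit identities are exactly the $v=2$ analogue of the relation $s_{t,i} + s_{t,n-1-i} = n-1$ exploited in Theorem~\ref{thm:duality}, and they feed directly into the next proposition relating $F_s$ and $F_{2-s}$.
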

\begin{proof}Notice that \begin{align*}
n&=i+(n-i-1) +1\\
&=(a+2)(r_i+r_{n-1-i})+p_i+p_{n-1-i}+1\\
&=(a+2)x,
\end{align*}
so that \[x=(r_i+r_{n-1-i})+\frac{p_i+p_{n-1-i}+1}{a+2}.\]
Since $x$ is an integer, this implies that $(p_i+p_{n-1-i})\mod(a+2)\equiv a+1$ and $p_i+p_{n-1-i} = a+1 + k(a+2)$. Since $0\leq p_j<a+2$, the result follows.
\end{proof}
\begin{proposition}For $v=2$, 
\[i\in F_s\quad\text{ if and only if }\quad n-1-i\in F_{2-s}.
\]
\end{proposition}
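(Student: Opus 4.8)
The plan is to reduce the entire statement to a single additive identity for $f$ together with the observation that $f(i)/(n-1)$ is never an integer on the relevant range. Throughout, $v=2$, so $q_i=0$ and $f(i)=ar_i-(2x-1)p_i$, and the index map $i\mapsto n-1-i$ is an involution on $\{1,\dots,n-2\}$. First I would compute $f(i)+f(n-1-i)$ using the preceding proposition, which gives $r_i+r_{n-1-i}=x-1$ and $p_i+p_{n-1-i}=a+1$. Summing and simplifying,
\[
f(i)+f(n-1-i)=a(x-1)-(2x-1)(a+1)=-x(a+2)+1=-(n-1),
\]
where the last step uses $n=x(a+2)$.

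Next I would establish that $(n-1)\nmid f(i)$ for every $i\in\{1,\dots,n-2\}$. By the lemma preceding Theorem~\ref{thm:conematrix}, $f(i)\equiv s_{2,i}\pmod{n-1}$, and $s_{2,i}$ is precisely the residue of $iax$ modulo $n-1$ lying in $[0,n-1)$. Because $n-1$ is coprime to both $a$ and $x$, it is coprime to $ax$; hence $iax\equiv 0\pmod{n-1}$ would force $i\equiv 0\pmod{n-1}$, which is impossible for $1\le i\le n-2$. Therefore $f(i)/(n-1)\notin\Z$, and consequently $\lceil f(i)/(n-1)\rceil=\lfloor f(i)/(n-1)\rfloor+1$.

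With these two facts the conclusion is immediate. Dividing the additive identity by $n-1$ gives $f(n-1-i)/(n-1)=-1-f(i)/(n-1)$, so
\[
\left\lfloor\frac{f(n-1-i)}{n-1}\right\rfloor=-1+\left\lfloor-\frac{f(i)}{n-1}\right\rfloor=-1-\left\lceil\frac{f(i)}{n-1}\right\rceil=-2-\left\lfloor\frac{f(i)}{n-1}\right\rfloor,
\]
where the middle equality uses $\lfloor-y\rfloor=-\lceil y\rceil$ and the last uses the non-integrality just established. If $i\in F_s$, i.e. $\lfloor f(i)/(n-1)\rfloor=-s$, then $\lfloor f(n-1-i)/(n-1)\rfloor=s-2$, which by the definition of the $F_k$ says exactly $n-1-i\in F_{2-s}$. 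The reverse implication follows by applying the same argument with $i$ replaced by $n-1-i$, using $n-1-(n-1-i)=i$.

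The only genuinely delicate point is the non-integrality step: the clean floor-to-ceiling conversion, and hence the exact shift of the index by $2$, depends essentially on the coprimality hypotheses of Theorem~\ref{thm:conematrix}. I would therefore invoke $\gcd(n-1,ax)=1$ explicitly rather than treat it as routine, since without it $f(i)/(n-1)$ could be an integer and the symmetric pairing $F_s\leftrightarrow F_{2-s}$ would fail.
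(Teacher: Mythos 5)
Your proof is correct and follows essentially the same route as the paper: the symmetry identities $r_i+r_{n-1-i}=x-1$, $p_i+p_{n-1-i}=a+1$ yield $f(i)+f(n-1-i)=-(n-1)$, the floor/ceiling conversion gives the index shift by $2$, and everything hinges on $(n-1)\nmid f(i)$. Your only departure is cosmetic but tidy: where the paper re-derives non-integrality by solving $f(i)=k(n-1)$ for $k=2r_i-i+\frac{iax}{n-1}$, you extract the congruence $f(i)\equiv iax \pmod{n-1}$ directly from the earlier lemma on $s_{2,i}$ and apply $\gcd(n-1,ax)=1$ — the same fact, reached with less computation, and you rightly flag that this coprimality hypothesis (inherited from Theorem~\ref{thm:conematrix}) is essential rather than routine.
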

\begin{proof}Since by definition $i\in F_s$ if and only if \[
s=-\left\lfloor\frac{ar_i-(2x-1)p_i)}{n-1}\right\rfloor
,\] the proposition is equivalent to the claim that \[
-\left\lfloor\frac{ar_i-(2x-1)p_i}{n-1}\right\rfloor -\left\lfloor\frac{ar_{n-1-i}-(2x-1)p_{n-1-i}}{n-1}\right\rfloor=2
.\]
Using the previous proposition and some tedious but straightforward algebra, we have that
\begin{align*}
&-\left\lfloor\frac{ar_i-(2x-1)p_i}{n-1}\right\rfloor -\left\lfloor\frac{ar_{n-1-i}-(2x-1)p_{n-1-i}}{n-1}\right\rfloor\\
&=-\left\lfloor\frac{ar_i-(2x-1)p_i}{n-1}\right\rfloor -\left\lfloor\frac{a(x-1-r_i)-(2x-1)[(a+1)-p_i]}{n-1}\right\rfloor\\
&=-\left\lfloor\frac{ar_i-(2x-1)p_i}{n-1}\right\rfloor -\left\lfloor\frac{-(n-1)-[ar_i-(2x-1p_i]}{n-1}\right\rfloor\\
&=-\left\lfloor\frac{ar_i-(2x-1)p_i}{n-1}\right\rfloor +\left\lceil\frac{(n-1)+[ar_i-(2x-1)p_i]}{n-1}\right\rfloor\\
&=1+\left(\left\lceil\frac{ar_i-(2x-1)p_i}{n-1}\right\rfloor-\left\lfloor\frac{ar_i-(2x-1)p_i}{n-1}\right\rfloor\right),
\end{align*}
so that unless $ar_i-(2x-1)p_i$ is a multiple of $(n-1)$, the claim holds.

Let $ar_i-(2x-1)p_i$ equal $k(n-1)$; we will show that $k$ is not an integer.
Observe that $p_i=i-(a+2)r_i$ (by definition) and that $n=x(a+2)$. We obtain
\begin{align*}k(n-1)&=ar_i-(2x-1)p_i\\
&=ar_i-(2x-1)(i-(a+2)r_i)\\
&=r_i[a+(2x-1)(a+2)]-(2x-1)i\\
&=r_i[2x(a+2)-2]-(2x-1)i\\
&=2r_i(n-1)-(2x-1)i,
\end{align*} 
so that 
\begin{align*}
k&=2r_i-\frac{(2x-1)i}{n-1}\\
&=2r_i-i\frac{(n-1)-ax}{n-1}\\
&=2r_i-i+\frac{iax}{n-1}.
\end{align*}
Since we assume that both $a$ and $x$ are relatively prime to $n-1$ and $i$ is less than $n-1$, $k$ is not an integer.
\end{proof}

We next give an example to demonstrate how to use our results to construct $P(x,x,ax)$.

\begin{example}\label{fppExample}Let $a=x=3$, so that $n-1$ is equal to 14, and note that this is relatively prime to 3. Since $i$ is in $F_0$ if and only if $ar_i\geq(2x-1)p_i$, we draw the elements of $P(3,3,3\cdot3)\smallsetminus \{0\}$ in the plane as shown in Figure~\ref{fig:construct339}, where the diamonds correspond to elements of $F_0$ and the triangles correspond to elements of $F_2$.
  
\begin{figure}[ht]
\centering
\begin{minipage}{.5\textwidth}
  \centering
\begin{tikzpicture}

    \node[diamond,fill=black,minimum size=10pt,inner sep=0pt] (10) at (0,0) {};
    \node[diamond,fill=black,minimum size=10pt,inner sep=0pt] [right of = 10] (11) {};
    \node[circle,fill=black,minimum size=10pt,inner sep=0pt] [right of = 11] (12) {};
    \node[circle,fill=black,minimum size=10pt,inner sep=0pt] [right of = 12] (13) {};
    
    \node[diamond,fill=black,minimum size=10pt,inner sep=0pt] [below of = 10] (5) {};
    \node[circle,fill=black,minimum size=10pt,inner sep=0pt] [below of = 11] (6) {};
    \node[circle,fill=black,minimum size=10pt,inner sep=0pt] [below of = 12] (7) {};
    \node[circle,fill=black,minimum size=10pt,inner sep=0pt] [below of = 13] (8) {};
    \node[regular polygon,regular polygon sides=3,fill=black,minimum size=10pt,inner sep=0pt] [right of = 8] (9) {};
    
    \node[regular polygon,regular polygon sides=3,fill=black,minimum size=10pt,inner sep=0pt] [below of = 9] (4) {};
    \node[regular polygon,regular polygon sides=3,fill=black,minimum size=10pt,inner sep=0pt] [below of = 8] (3) {};
    \node[circle,fill=black,minimum size=10pt,inner sep=0pt] [below of = 7] (2) {};
    \node[circle,fill=black,minimum size=10pt,inner sep=0pt] [below of = 6] (1) {};
	\node[circle,minimum size=10pt,inner sep=2pt,draw] [below of = 5] (0) {};
	\node [right of = 4] (right) {$p_i$};
	\node [above of = 10] (up) {$r_i$};
	
	\draw[->] (0) -- (right);
	\draw[->] (0) -- (up);
	\draw[->, dashed] (0) -- (3/2,0);
	\draw[->, dashed] (5/2,-2) -- (4,0);
	
	\node at (-.5,0) {2};
	\node at (-.5,-1) {1};
	\node at (-.5,-2) {0};
	
	\node at (1,-2.25) [below]{1};
	\node at (0,-2.25) [below]{0};
	\node at (2,-2.25) [below]{2};
	\node at (3,-2.25) [below]{3};
	\node at (4,-2.25) [below]{4};


\end{tikzpicture}
\caption{Constructing the poset $P(3,3,9)$.}
\label{fig:construct339}
\end{minipage}%
\begin{minipage}{.5\textwidth}
  \centering
\begin{tikzpicture}

    \node[circle,fill=black,minimum size=10pt,inner sep=0pt] at (-1,1) {};
    \node[circle,fill=black,minimum size=10pt,inner sep=0pt] at (-2,1) {};
    \node[circle,fill=black,minimum size=10pt,inner sep=0pt] at (-1,2) {};
    \node at (-1.5,0) [below]{$p_j-p_i=0$};
    \node at (0,5/3) [right]{$r_j-r_i=0$};
    \node at (-4,7/3) {$3(r_j-r_i)=14+5(p_j-p_i)$};

	\draw[->,dashed] (0,0) -- (-3.5,0);
	\draw[->,dashed] (0,0) -- (0,5);
	\draw[<->,dashed] (-3,-1/5) -- (1/3,5);
\end{tikzpicture}
\caption{Relations in $P(3,3,9)$ for $\ell=-1$}
\label{relsP1}
\end{minipage}
\end{figure}

If $i\in F_s$ and $j\in F_{s+1}$, then $i\prec j$ if and only if the point $(p_j-p_i,r_j-r_i)$ is among the three points in Figure \ref{relsP1}. For example, we see that for $i=1$, $i\prec j$ if and only if $(p_j-p_i,r_j-r_i)$ is among the points $(-1,1)$, $(-1,2)$, and $(-2,1)$. The only suitable values of $j$ are 5 and 10. The induced relations in $P(3,3,9)\smallsetminus \{0\}$ are depicted in Figure \ref{HasseP1}.

\begin{figure}[ht]
  \centering
\begin{tikzpicture}

    \node[diamond,fill=black,minimum size=10pt,inner sep=0pt] (10) at (0,0) {};
    \node[diamond,fill=black,minimum size=10pt,inner sep=0pt] [right of = 10] (11) {};
    \node[circle,fill=black,minimum size=10pt,inner sep=0pt] [right of = 11] (12) {};
    \node[circle,fill=black,minimum size=10pt,inner sep=0pt] [right of = 12] (13) {};
    
    \node[diamond,fill=black,minimum size=10pt,inner sep=0pt] [below of = 10] (5) {};
    \node[circle,fill=black,minimum size=10pt,inner sep=0pt] [below of = 11] (6) {};
    \node[circle,fill=black,minimum size=10pt,inner sep=0pt] [below of = 12] (7) {};
    \node[circle,fill=black,minimum size=10pt,inner sep=0pt] [below of = 13] (8) {};
    \node[regular polygon,regular polygon sides=3,fill=black,minimum size=10pt,inner sep=0pt] [right of = 8] (9) {};
    
    \node[regular polygon,regular polygon sides=3,fill=black,minimum size=10pt,inner sep=0pt] [below of = 9] (4) {};
    \node[regular polygon,regular polygon sides=3,fill=black,minimum size=10pt,inner sep=0pt] [below of = 8] (3) {};
    \node[circle,fill=black,minimum size=10pt,inner sep=0pt] [below of = 7] (2) {};
    \node[circle,fill=black,minimum size=10pt,inner sep=0pt] [below of = 6] (1) {};

	\draw [thick, shorten <=-2pt, shorten >=-2pt] (1) -- (5);
	\draw [thick, shorten <=-2pt, shorten >=-2pt] (1) -- (10);
	
	\draw [thick, shorten <=-2pt, shorten >=-2pt] (2) -- (5);
	\draw [thick, shorten <=-2pt, shorten >=-2pt] (2) -- (11);
	
	\draw [thick, shorten <=-2pt, shorten >=-2pt] (3) -- (6);
	\draw [thick, shorten <=-2pt, shorten >=-2pt] (3) -- (7);
	\draw [thick, shorten <=-2pt, shorten >=-2pt] (3) -- (12);
	
	\draw [thick, shorten <=-2pt, shorten >=-2pt] (4) -- (7);
	\draw [thick, shorten <=-2pt, shorten >=-2pt] (4) -- (8);
	\draw [thick, shorten <=-2pt, shorten >=-2pt] (4) -- (13);
	
	\draw [thick, shorten <=-2pt, shorten >=-2pt] (6) -- (10);
	
	\draw [thick, shorten <=-2pt, shorten >=-2pt] (7) -- (10);
	\draw [thick, shorten <=-2pt, shorten >=-2pt] (7) -- (11);
	
	\draw [thick, shorten <=-2pt, shorten >=-2pt] (8) -- (10);
	
	\draw [thick, shorten <=-2pt, shorten >=-2pt] (9) -- (12);
	\draw [thick, shorten <=-2pt, shorten >=-2pt] (9) -- (13);
\end{tikzpicture}
\caption{Constructing relations.}
\label{HasseP1}
\end{figure}

For $\ell=-2$ one can construct a similar but larger diagram as shown in Figure~\ref{relsP1}, which leads to the additional relations given in Figure~\ref{longRels}.
Combining these leads to our construction of the poset $P(3,3,9)$, depicted in Figure \ref{finalPoset}.

\begin{figure}[ht]
\centering
\begin{minipage}{.5\textwidth}
  \centering
\begin{tikzpicture}

    \node[diamond,fill=black,minimum size=10pt,inner sep=0pt] (10) at (0,0) {};
    \node[diamond,fill=black,minimum size=10pt,inner sep=0pt] [right of = 10] (11) {};
    \node[circle,fill=black,minimum size=10pt,inner sep=0pt] [right of = 11] (12) {};
    \node[circle,fill=black,minimum size=10pt,inner sep=0pt] [right of = 12] (13) {};
    
    \node[diamond,fill=black,minimum size=10pt,inner sep=0pt] [below of = 10] (5) {};
    \node[circle,fill=black,minimum size=10pt,inner sep=0pt] [below of = 11] (6) {};
    \node[circle,fill=black,minimum size=10pt,inner sep=0pt] [below of = 12] (7) {};
    \node[circle,fill=black,minimum size=10pt,inner sep=0pt] [below of = 13] (8) {};
    \node[regular polygon,regular polygon sides=3,fill=black,minimum size=10pt,inner sep=0pt] [right of = 8] (9) {};
    
    \node[regular polygon,regular polygon sides=3,fill=black,minimum size=10pt,inner sep=0pt] [below of = 9] (4) {};
    \node[regular polygon,regular polygon sides=3,fill=black,minimum size=10pt,inner sep=0pt] [below of = 8] (3) {};
    \node[circle,fill=black,minimum size=10pt,inner sep=0pt] [below of = 7] (2) {};
    \node[circle,fill=black,minimum size=10pt,inner sep=0pt] [below of = 6] (1) {};

	\draw [thick, shorten <=-2pt, shorten >=-2pt] (3) -- (5);
	\draw [thick, shorten <=-2pt, shorten >=-2pt] (3) -- (10);
	
	\draw [thick, shorten <=-2pt, shorten >=-2pt] (4) -- (5);
	\draw [thick, shorten <=-2pt, shorten >=-2pt] (4) -- (10);
	\draw [thick, shorten <=-2pt, shorten >=-2pt] (4) -- (11);
	
	\draw [thick, shorten <=-2pt, shorten >=-2pt] (9) -- (10);
	\draw [thick, shorten <=-2pt, shorten >=-2pt] (9) -- (11);
\end{tikzpicture}
\caption{Constructing additional relations.}
\label{longRels}
\end{minipage}%
\begin{minipage}{.5\textwidth}
  \centering
\begin{tikzpicture}

    \node[fill=white,minimum size=15pt,inner sep=0pt] (10) at (0,0) {10};
    \node[fill=white,minimum size=15pt,inner sep=0pt] [right of = 10] (11) {11};
    \node[fill=white,minimum size=15pt,inner sep=0pt] [right of = 11] (12) {12};
    \node[fill=white,minimum size=15pt,inner sep=0pt] [right of = 12] (13) {13};
    
    \node[fill=white,minimum size=15pt,inner sep=0pt] [below of = 10] (5) {5};
    \node[fill=white,minimum size=15pt,inner sep=0pt] [below of = 11] (6) {6};
    \node[fill=white,minimum size=15pt,inner sep=0pt] [below of = 12] (7) {7};
    \node[fill=white,minimum size=15pt,inner sep=0pt] [below of = 13] (8) {8};
    \node[fill=white,minimum size=15pt,inner sep=0pt] [right of = 8] (9) {9};
    
    \node[fill=white,minimum size=15pt,inner sep=0pt] [below of = 9] (4) {4};
    \node[fill=white,minimum size=15pt,inner sep=0pt] [below of = 8] (3) {3};
    \node[fill=white,minimum size=15pt,inner sep=0pt] [below of = 7] (2) {2};
    \node[fill=white,minimum size=15pt,inner sep=0pt] [below of = 6] (1) {1};
    \node[fill=white,minimum size=15pt,inner sep=0pt] at (5,-3) (0) {0};

	\draw [ shorten <=-2pt, shorten >=-2pt] (1) -- (5);
	\draw [ shorten <=-2pt, shorten >=-2pt] (1) -- (10);
	
	\draw [ shorten <=-2pt, shorten >=-2pt] (2) -- (5);
	\draw [ shorten <=-2pt, shorten >=-2pt] (2) -- (11);
	
	\draw [ shorten <=-2pt, shorten >=-2pt] (3) -- (5);
	\draw [ shorten <=-2pt, shorten >=-2pt] (3) -- (6);
	\draw [ shorten <=-2pt, shorten >=-2pt] (3) -- (7);
	\draw [ shorten <=-2pt, shorten >=-2pt] (3) -- (12);
	
	\draw [ shorten <=-2pt, shorten >=-2pt] (4) -- (5);
	\draw [ shorten <=-2pt, shorten >=-2pt] (4) -- (7);
	\draw [ shorten <=-2pt, shorten >=-2pt] (4) -- (8);
	\draw [ shorten <=-2pt, shorten >=-2pt] (4) -- (13);
	
	\draw [ shorten <=-2pt, shorten >=-2pt] (6) -- (10);
	
	\draw [ shorten <=-2pt, shorten >=-2pt] (7) -- (10);
	\draw [ shorten <=-2pt, shorten >=-2pt] (7) -- (11);
	
	\draw [ shorten <=-2pt, shorten >=-2pt] (8) -- (11);
	
	\draw [ shorten <=-2pt, shorten >=-2pt] (9) -- (10);
	\draw [ shorten <=-2pt, shorten >=-2pt] (9) -- (11);
	\draw [ shorten <=-2pt, shorten >=-2pt] (9) -- (12);
	\draw [ shorten <=-2pt, shorten >=-2pt] (9) -- (13);
	
	\draw [ shorten <=-2pt, shorten >=-2pt] (0) -- (1);
	\draw [ shorten <=-2pt, shorten >=-2pt] (0) -- (4);
	\draw [ shorten <=-2pt, shorten >=-2pt] (0) -- (2);
	\draw [ shorten <=-2pt, shorten >=-2pt] (0) -- (3);
	\draw [ shorten <=-2pt, shorten >=-2pt] (0) -- (9);
	
\end{tikzpicture}
\caption{The poset $P(3,3,9)$.}
\label{finalPoset}
\end{minipage}
\end{figure}

\end{example}


\section{Algebraic Implications}\label{sec:algebraic}

In this section, we discuss the algebraic implications of our analysis of the fundamental parallelepiped poset.
Our main result is Theorem~\ref{antichain} showing that the Poincar\'e series for the semigroup algebra associated to an antichain simplex is rational.
Unlike previous work of the authors~\cite{BraunDavisPoinc} establishing rationality of Poincar\'e series for lattice simplex semigroup algebras, our proof technique in this work involves the bar resolution.

\subsection{A Review of Resolutions and Poincar\'e Series}
For all background regarding graded resolutions of algebras, see Peeva's book~\cite{GradedSyzygies}.
Recall that the semigroup $(\Lambda,+)$ associated to a $d$-simplex $\Delta$ is the intersection $\Lambda\,:=\,\cone(\Delta)\cap\Z^{d+1}$ with $+$ given by the usual coordinate-wise addition on $\Z^{d+1}$.
The {\emph{semigroup algebra}} $K[\Lambda]$ associated to a semigroup $\Lambda\subset\Z^{d+1}$ is the $K$ vector space with basis $\{e_\alpha\}_{\alpha\in\Lambda}$ equipped with the product $e_\alpha\cdot e_\beta\,=\,e_{\alpha+\beta}$.
For $K$ a field, a $K$-algebra $R$ is called {\emph{graded}} with respect to $\Z^{n}$ if it can be written as a direct sum
\[R=\bigoplus_{\alpha\in\Z^n}R_\alpha
,\]
where for $x\in R_\alpha$ and $y\in R_\beta$, we have that $x\cdot y\in R_{\alpha+\beta}$. 
It is immediate that $K[\Lambda]$ is a $\Z^{d+1}$--graded $K$--algebra.
It is common to ``coarsen'' the grading of $K[\Lambda]$ by considering it to be a $\Z$-graded algebra with grading given by the zeroth coordinate of its $\Z^{d+1}$--grading.

In this context, the seemingly arbitrary definition of the cone over a simplex $\Delta$ is shown to be natural and helpful by the following observation.
For a point $x=(x_0,x_1,\dots,x_d)$ in $\R^{d+1}$, we define the {\emph{height}} of $x$ to be $\height(x)=x_0$.
Letting $X_n$ denote the collection of points $x\in\R^{d+1}$ with height equal to $n$, we have the set equality 
\[X_n\cap\cone(\Delta)\;=\;\{(n,n\cdot x)\in\R^{d+1}\text{ such that }x\in\Delta\}
  .\] Observe that the set $\Z^{d+1}\cap X_n\cap\cone(\Delta)$ is in bijection with the set of lattice points of $n\Delta$ (by dropping the zeroth coordinate).
Thus, the coarsened grading of $K[\Lambda]$ corresponds to the height function in the cone.

We need to consider complexes of vector spaces in order to define free resolutions of $K$-algebras.
Given a collection of vector spaces $\{F_i\}_{i\in\Z_{\geq0}}$, together with linear maps $\partial_i$ from $F_i$ to $F_{i-1}$, we call the sequence
\[F\,: \quad F_0\xleftarrow[]{\partial_1} F_1\xleftarrow[]{\partial_2}\cdots \xleftarrow[]{\partial_i}F_i \xleftarrow[]{\partial_{i+i}}F_{i+1}\xleftarrow[]{\partial_{i+2}}\cdots 
\]
a {\emph{complex}} of vector spaces if the image of $\partial_{i+1}$ is contained in the kernel of $\partial_i$ for all $i\geq1$.
The $i$'th homology of the complex $F$ is the quotient vector space $H_i(F)\,:=\,\ker\partial_{i}/\im\partial_{i+1}$.
Let $M$ be a finitely generated graded module over $R$, $F_i$ be a free $R$-module and $\partial_i$ be a graded $R$-module homomorphism such that the image of $\partial_{i+1}$ is equal to the kernel of $\partial_i$ for all $i\geq1$.
Then the complex $F$ is a {\emph{free resolution}} of $M$ over $R$ if $M\cong F_0/\im\partial_1$.
Because it is graded, we may split the free resolution $F$ into a direct sum of $K$ vector space complexes by writing each $F_i$ as a direct sum $\bigoplus_{\alpha\in\Z^{n}}F_{i,\alpha}$.

For $(F,\,\partial)$ a complex of free $R$-modules, we can define a tensor complex $(M\otimes F,\, \Id\otimes\partial)$. 
If $F$ is a graded free resolution of $M$, the Betti number $\beta_{i,\alpha}^R(M)$ of a graded $R$-module $M$ is the vector space dimension of the $i$'th homology of the graded component of $K\otimes F$ of degree $\alpha$.
This leads to our primary object of interest.

\begin{definition}The Poincar\'e series $P_R^M(z;\t)$ is the ordinary generating function for the Betti numbers of the $R$-module $M$, i.e.,
\[P_R^M(z;\t)\,=\,\sum_{\alpha\in\Z^n}\sum_{i\geq0}\beta_{i,\alpha}^R(M)z^i\t^\alpha
.\]
\end{definition}

In the case that $R$ is a polynomial ring in $n$ variables, the Hilbert Syzygy Theorem says that the Poincar\'e series $P_R^M(z;\t)$ is a polynomial for any finitely generated {$R$-module} $M$. However, when $R$ is not a polynomial ring, the growth of the Betti numbers is not so simple --- the Poincar\'e series may not even be rational.

\subsection{Rational Poincar\'e Series}
We call a $\Z^n$-graded algebra $R$ {\emph{connected}} if $R_0\cong K$ (as in the case of a semi-group ring $K[\Lambda]$ associated to a lattice simplex $\Delta$). By a slight abuse of notation, we write \[\m \,:=\,\bigoplus_{\alpha\in\Lambda\backslash0}R_\alpha \text{ and } K\,\cong\,R/\m\] as $R$-modules.
It has been shown \cite{gulliksen1970massey} that if the Poincar\'e series for the ground field $K$ as an $R$-module is rational for all $R$, then the Poincar\'e series is rational for any finitely generated module. Hence the question of Serre-Kaplansky:
\begin{question} Is the Poincar\'e series of the ground field $K$ over $R$ rational for all $K$-algebras $R$?
\end{question}
This question was answered in the negative by Anick~\cite{IrrPoincare}, and much subsequent work has focused on determining the properties of $R$ that lead to rationality or irrationality.
Our interest is in the rationality of the Poincar\'e series for $K[\Lambda]$, which leads us to define a related algebra as follows.

Because $K[\Lambda]$ is finitely generated (by its Hilbert basis $\HH$ given in Definition~\ref{def:hb}) it has a presentation 
\begin{equation}0\rightarrow \ker\varphi\rightarrow K[V_0,\dots,V_{d},x_1,\dots,x_m]\xrightarrow[]{\varphi}K[\Lambda]\rightarrow 0,
\end{equation}
where the map $\varphi$ is defined by the image of variables: the image of $V_i$ is the vector space basis element $e_{(1,v_i)}$ associated with the Hilbert basis element $(1,v_i)$ in $\Lambda$, and the image of $x_i$ is $e_{h_i}$ where the $h_i$ are the remaining elements of the Hilbert basis. 
This defines a surjective degree map $\deg(\cdot)$ from the set of monomials of $K[V_1,\dots,V_{d+1},x_1,\dots,x_m]$ onto $\Lambda$ by 
\[
\deg\left(\prod V_i^{s_i}\cdot\prod x_j^{r_j}\right)=\sum s_i(1,v_i) + \sum r_jh_j \, .
\]
Extending $\deg(\cdot)$ $K$-linearly, we see that $\ker\varphi$ is the toric ideal $I$ generated by all binomials 
\[
\mathbf{V}^{u_V}\mathbf{x}^{u_x}-\mathbf{V}^{w_V}\mathbf{x}^{w_x}
\]
such that $\deg\left(\mathbf{V}^{u_V}\mathbf{x}^{u_x})=\deg(\mathbf{V}^{w_V}\mathbf{x}^{w_x}\right)$. 

\begin{definition} The {\emph{Fundamental Parallelepiped Algebra}} $\fpa(\Delta)$ associated with the simplex $\Delta$ may be constructed in two ways; firstly as the quotient 
\[K[V_0,\dots,V_d,x_1,\dots,x_m]\,/\,\ker\varphi+\big(V_0,\dots,V_d\big),
\]
and secondly as the algebra with $K$ vector space basis \[\big\{e_{\sigma}\text{ such that }\sigma\in\Z^{d+1}\cap\Pi_{\Delta}\big\}\] and with multiplication given by \[e_\sigma\cdot e_\mu = \begin{cases} e_{\sigma+\mu} &\text{ if } \sigma+\mu\in\Z^{d+1}\cap\Pi_\Delta,\text{ and}\\
0 &\text{ otherwise.}
\end{cases}
\]
\end{definition}

One inspiration for defining this algebra is the fact that, due to an argument presented earlier, every element of $\Lambda$ may be written uniquely as a non-negative sum of points $(1,v_i)$ and a single point in $\Pi_\Delta$.
Because the generators $e_{(1,v_i)}$ form a linear system of parameters for $K[\Lambda]$, we have the following result which follows from~\cite[Prop. 3.3.5]{InfiniteFree}.
\begin{theorem}For the $\Z$-graded algebra $K[\Lambda]$, we have the following equality:\begin{align*}
P_{K[\Lambda]}^K(z;\t)&=\prod_i\left(1+z\t^{(1,v_i)}\right)\cdot P_{\fpa(\Delta)}^K(z;\t)\\
&= P_{K[V_0,\dots,V_d]}^K(z;\t)\cdot P_{\fpa(\Delta)}^K(z;\t).
\end{align*}
\end{theorem}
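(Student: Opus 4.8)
The plan is to recognize the stated identity as an instance of the standard change-of-rings formula for the Poincar\'e series of the residue field when one quotients by a regular sequence of homogeneous elements. First I would record the reduction already implicit in the two descriptions of $\fpa(\Delta)$: since $K[\Lambda]=K[V_0,\dots,V_d,x_1,\dots,x_m]/\ker\varphi$ and $\varphi(V_i)=e_{(1,v_i)}$, we have
\[
\fpa(\Delta)\;\cong\;K[\Lambda]\big/\big(e_{(1,v_0)},\dots,e_{(1,v_d)}\big),
\]
so the theorem asserts exactly that killing the sequence $\theta_i:=e_{(1,v_i)}$ multiplies $P^K_{\fpa(\Delta)}(z;\t)$ by $\prod_i(1+z\,\t^{(1,v_i)})$.

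The first key step is to show that $\theta_0,\dots,\theta_d$ is a $K[\Lambda]$-regular sequence, which is the hypothesis the cited proposition requires. This follows directly from the unique-decomposition fact established earlier: every lattice point of $\cone(\Delta)$ is uniquely a sum $\sum_i r_i(1,v_i)+\sigma$ with $r_i\in\Z_{\geq0}$ and $\sigma\in\Pi_\Delta\cap\Z^{d+1}$. Translated into the algebra, the monomials $\prod_i\theta_i^{r_i}\cdot e_\sigma$ form a $K$-basis of $K[\Lambda]$; taking $\sigma=0$ shows the $\theta_i$ are algebraically independent, and the full basis exhibits $K[\Lambda]$ as a \emph{free} module over its polynomial subalgebra $K[\theta_0,\dots,\theta_d]$ with the finite basis $\{e_\sigma:\sigma\in\Pi_\Delta\cap\Z^{d+1}\}$. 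Since $\theta_0,\dots,\theta_d$ is a regular sequence in the polynomial ring $K[\theta_0,\dots,\theta_d]$ and $K[\Lambda]$ is free (hence flat) over it, the sequence is $K[\Lambda]$-regular; no appeal to Hochster's theorem on normal semigroup rings is needed, as the combinatorial decomposition supplies the freeness outright.

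The second key step is the homological factorization itself, i.e.\ \cite[Prop.~3.3.5]{InfiniteFree}. Conceptually, because the sequence is regular, the Koszul complex on $\theta_0,\dots,\theta_d$ is a free resolution of $\fpa(\Delta)$ over $K[\Lambda]$, and the associated change-of-rings spectral sequence degenerates to a multigraded isomorphism
\[
\operatorname{Tor}^{K[\Lambda]}(K,K)\;\cong\;\operatorname{Tor}^{\fpa(\Delta)}(K,K)\;\otimes_K\;\textstyle\bigwedge\!\big(\xi_0,\dots,\xi_d\big),
\]
where each exterior generator $\xi_i$ sits in homological degree $1$ and internal multidegree $(1,v_i)$. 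Passing to multigraded Poincar\'e series turns the tensor product into a product and the exterior algebra into $\prod_i(1+z\,\t^{(1,v_i)})$, giving the first displayed equality. The second equality is then immediate: over the polynomial ring $K[V_0,\dots,V_d]$ with $\deg V_i=(1,v_i)$, the Koszul complex is the minimal free resolution of $K$, so its Betti numbers are indexed by subsets $T\subseteq\{0,\dots,d\}$ in multidegree $\sum_{i\in T}(1,v_i)$, and summing $z^{|T|}\t^{\sum_{i\in T}(1,v_i)}$ over all $T$ recovers $\prod_i(1+z\,\t^{(1,v_i)})=P^K_{K[V_0,\dots,V_d]}(z;\t)$.

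The step I expect to require the most care is the multigraded bookkeeping in applying the cited proposition. Prop.~3.3.5 is naturally stated for a regular sequence in a singly-graded (or local) setting, so I would need to verify that it applies verbatim in the finer $\Z^{d+1}$-grading, so that each Koszul/exterior factor carries the correct multidegree $(1,v_i)$ rather than merely the coarsened height degree $1$. Once the regular-sequence property is in hand this is routine, since each $\theta_i$ is homogeneous of multidegree $(1,v_i)$ and the Koszul differential is multigraded, but it is the point where the argument must be checked rather than simply quoted.
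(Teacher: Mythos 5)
Your proposal is correct and follows essentially the same route as the paper: the paper's entire proof is the observation that the generators $e_{(1,v_i)}$ form a linear system of parameters for $K[\Lambda]$ together with a citation of \cite[Prop.~3.3.5]{InfiniteFree}, which is precisely the change-of-rings factorization you invoke, with the same identification $\fpa(\Delta)\cong K[\Lambda]/(e_{(1,v_0)},\dots,e_{(1,v_d)})$. The only difference is that you verify the regular-sequence hypothesis explicitly---deducing freeness of $K[\Lambda]$ over $K[\theta_0,\dots,\theta_d]$ with basis $\{e_\sigma:\sigma\in\Pi_\Delta\cap\Z^{d+1}\}$ from the unique decomposition of lattice points of $\cone(\Delta)$, and noting via the multidegrees $(1,v_i)$ that the $\theta_i$ stay outside $\m^2$---where the paper leaves this implicit, so your write-up is a sound, self-contained filling-in of the same argument.
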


\subsection{Bar Resolutions and Antichain Simplices}

We will use the Bar resolution of $K$, with $K$ as a module over a graded $K$-algebra, which is a standard construction.
In the definition we use the bar symbol $|$ to mean a tensor over $K$, and reserve the tensor symbol $\otimes$ to mean a tensor over the ring under consideration.

\begin{definition} The Bar resolution $\B$ of the module $K$ over the $\Z^n$-graded $K$ algebra $\fpa(\Delta)$ has graded components $[\B_i]_\alpha$ with vector space basis given by $\delta_0\,|\,\cdots\,|\, \delta_i$ such that $\delta_0$ is in $\Pi_\Delta$, each $\delta_j$ is in $\Pi_{\Delta}\smallsetminus \{0\}$ (for $j\geq1$), and $\sum_{j=0}^i\delta_j=\alpha$. The differential map $\partial_i$ acts by sending $\delta_0\,|\,\cdots\,|\, \delta_i$ to the sum \[\sum_{j=0}^{i-1}(-1)^{j}\delta_0\,|\, \cdots\,|\, \delta_{j-1}\,|\, \delta_j+\delta_{j+1}\,|\, \delta_{j+2}\,|\, \cdots \,|\,\delta_i\] in $\B_{i-1}$.\end{definition}

Recall that in order to compute the Betti number $\beta_{i,\alpha}$ we must compute homology in the tensor complex $B:=\,K\otimes\B$. Because we identify $K$ with the vector sub-space $R_0$ with basis $e_0$, we see that $[B_i]_\alpha$ is generated as a vector space by the collection $\left\{e_0\otimes\delta_0\,|\,\delta_1\,|\,\cdots\,|\, \delta_i\right\}$. Observe that unless $\delta_0$ is equal the point 0 in $\Lambda$, the product $e_0\otimes\delta_0$ is equal to zero, since for $\sigma$ not equal to zero, $e_0\cdot e_\sigma$ is equal to zero in the module $K$, and hence
\[e_0\otimes e_\sigma\,=\,e_0\cdot e_\sigma\otimes e_0\,=\,0\otimes e_0\,=\,0
.\]
 Consequently, for $i\geq1$, $[B_i]_\alpha$ has a vector space basis in bijection with  the collection of $\delta_1\,|\,\cdots\,|\, \delta_i$ such that each $\delta_j$ is in $\Pi_{\Delta}\smallsetminus \{0\}$ and $\sum_{j=1}^i\delta_j=\alpha$. We further have that $[B_0]_\alpha$ is the trivial vector space unless $\alpha$ is zero in $\Lambda$, and that $[B_0]_{0}$ is isomorphic to $K$.

For a unimodular simplex $\Delta$, it is clear that the $\fpa(\Delta)$ is one-dimensional as a $K$ vector space, and has basis $e_0$. Consequently, $[B_i]_\alpha$ has empty basis (and dimension zero) unless $\alpha$ is equal to zero in $\Lambda$ and $i=0$.
It follows that the complex $B$ is given by
\[
0\leftarrow K\leftarrow0\leftarrow0\leftarrow\cdots
\]
and that
\[
\beta_{i,\alpha}=\begin{cases}1&\text{ if }i=0\text{ and }\alpha=0,\\
0&\text{ otherwise.}
\end{cases}
\]
Thus, $P_{\fpa(\Delta)}^K(z;\t)=1$. The result is consistent with the fact that $K[\Lambda]$ is a polynomial ring in the case that $\Delta$ is unimodular.

In the case of an antichain simplex, the differential map is uniformly zero, since $e_{\delta_j}\cdot e_{\delta_{j+1}}$ equals zero for all $j$. Further, $\beta_{i,\alpha}$ is equal to the dimension of $[B_i]_\alpha$. By considering the recurrence (for large $i$ and $\alpha$) 
\[\dim_K[B_i]_\alpha\,=\,\sum_{\substack{\sigma\in\PP(\Delta)\\\sigma\neq0}}\dim_K[B_i]_{\alpha-\sigma}
,\] we obtain the following.
\begin{theorem}\label{antichain} For an antichain simplex $\Delta$, we have
  \[P_{\fpa(\Delta)}^K(z;\t)=\;\left(\,1-\sum_{\substack{\sigma\in\PP(\Delta)\\\sigma\neq0}}z\t^\sigma\right)^{-1} \, ,
  \]
  and thus the Poincar\'e series is rational.
\end{theorem}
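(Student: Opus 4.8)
The plan is to compute the Betti numbers $\beta_{i,\alpha}$ directly from the tensor complex $B = K \otimes \B$ and then assemble the Poincar\'e series as a rational function. The key observation, already laid out in the paragraphs preceding the statement, is that for an antichain simplex the differential $\partial_i$ on $B$ is identically zero: for any two nonzero $\delta_j, \delta_{j+1} \in \Pi_\Delta$, the sum $\delta_j + \delta_{j+1}$ fails to lie in $\Pi_\Delta$ (since $\PP(\Delta)\smallsetminus\{0\}$ is an antichain, no nonzero element is below another, so no sum of two nonzero elements is itself a lattice point of $\Pi_\Delta$), hence $e_{\delta_j}\cdot e_{\delta_{j+1}} = 0$ in $\fpa(\Delta)$ and each term of $\partial_i(\delta_1\,|\,\cdots\,|\,\delta_i)$ vanishes. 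Consequently $\beta_{i,\alpha} = \dim_K [B_i]_\alpha$, with no homology quotient to take.

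Next I would translate this dimension count into a generating-function identity. For $i \geq 1$, the basis of $[B_i]_\alpha$ is in bijection with ordered tuples $(\delta_1,\ldots,\delta_i)$ of nonzero elements of $\PP(\Delta)$ summing to $\alpha$, while $[B_0]_\alpha$ is $K$ if $\alpha = 0$ and trivial otherwise. Therefore the full Poincar\'e series factors as a sum over $i$ of $z^i$ times the number of such $i$-tuples weighted by $\t^\alpha$, which is precisely the $i$-th power of the single-letter generating function $\sum_{\sigma \neq 0} z\,\t^\sigma$. Summing the resulting geometric series in the formal variable gives
\[
P_{\fpa(\Delta)}^K(z;\t) = \sum_{i \geq 0}\left(\sum_{\substack{\sigma\in\PP(\Delta)\\ \sigma\neq 0}} z\,\t^\sigma\right)^{\!i} = \left(1 - \sum_{\substack{\sigma\in\PP(\Delta)\\ \sigma\neq 0}} z\,\t^\sigma\right)^{-1},
\]
which is manifestly rational since the sum inside is a finite polynomial in $z$ and the $\t$-variables (there being only finitely many lattice points in $\Pi_\Delta$). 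The recurrence $\dim_K[B_i]_\alpha = \sum_{\sigma \neq 0}\dim_K[B_i]_{\alpha-\sigma}$ stated in the excerpt is exactly the combinatorial shadow of this geometric-series collapse, and I would either cite it or re-derive it in one line.

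The main obstacle, such as it is, is not analytic but bookkeeping: I must be careful that the vanishing of the differential is genuinely uniform, i.e., that \emph{every} internal multiplication $\delta_j + \delta_{j+1}$ in the Bar differential lands outside $\Pi_\Delta$, not merely some of them. This requires the antichain hypothesis in its full strength, since a single surviving term would produce nonzero maps and force an actual homology computation. I would therefore state precisely that $\sigma, \tau \in \PP(\Delta)\smallsetminus\{0\}$ with $\sigma + \tau \in \Z^{d+1}\cap\Pi_\Delta$ would force $\sigma \prec \sigma+\tau$, contradicting the antichain property, so no such pair exists. The only other point needing a word of care is convergence/well-definedness of the formal geometric series, which is immediate because we work in the ring of formal power series in $z$ graded by homological degree, so for each fixed $\alpha$ only finitely many $i$ contribute. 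With these two remarks in place the argument is complete.
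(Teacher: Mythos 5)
Your proposal is correct and follows essentially the same route as the paper: the antichain hypothesis kills every term of the bar differential, so $\beta_{i,\alpha}=\dim_K[B_i]_\alpha$, and counting tuples of nonzero elements of $\PP(\Delta)$ yields the geometric series that inverts $1-\sum_{\sigma\neq 0}z\t^\sigma$. Your explicit remark that $\sigma+\tau\in\Pi_\Delta$ would force $\sigma\prec\sigma+\tau$ is exactly the justification the paper leaves implicit, and your tuple-counting derivation is just an unrolled form of the paper's dimension recurrence.
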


\bibliographystyle{amsplain}
\bibliography{BrianBib}

\providecommand{\bysame}{\leavevmode\hbox to3em{\hrulefill}\thinspace}
\providecommand{\MR}{\relax\ifhmode\unskip\space\fi MR }
\providecommand{\MRhref}[2]{%
  \href{http://www.ams.org/mathscinet-getitem?mr=#1}{#2}
}
\providecommand{\href}[2]{#2}
\begin{thebibliography}{10}

\bibitem{oeis}
The On-Line Encyclopedia of Integer Sequences, published electronically at
  https://oeis.org.

\bibitem{IrrPoincare}
David Anick, \emph{Construction d'espaces de lacets et d'anneaux locaux \`a
  s\'eries de {P}oincar\'e-{B}etti non rationnelles}, C. R. Acad. Sci. Paris
  S\'er. A-B \textbf{290} (1980), no.~16, A729--A732. \MR{577145}

\bibitem{InfiniteFree}
Luchezar~L. Avramov, \emph{Infinite free resolutions}, Six lectures on
  commutative algebra, Mod. Birkh\"auser Class., Birkh\"auser Verlag, Basel,
  2010, pp.~1--118. \MR{2641236}

\bibitem{CCD}
Matthias Beck and Sinai Robins, \emph{Computing the continuous discretely},
  second ed., Undergraduate Texts in Mathematics, Springer, New York, 2015,
  Integer-point enumeration in polyhedra, With illustrations by David Austin.
  \MR{3410115}

\bibitem{BraunDavisPoinc}
Benjamin Braun and Brian Davis, \emph{{R}ationality of {P}oincar\'e series for
  a family of lattice simplices}, https://arxiv.org/abs/1711.04206.

\bibitem{BraunDavisSolusIDP}
Benjamin Braun, Robert Davis, and Liam Solus, \emph{Detecting the integer
  decomposition property and {E}hrhart unimodality in reflexive simplices},
  Adv. in Appl. Math. \textbf{100} (2018), 122--142. \MR{3835192}

\bibitem{BraunLiu}
Benjamin Braun and Fu~Liu, \emph{$ h^* $-{P}olynomials with roots on the unit
  circle}, to appear in \emph{Experimental Mathematics},
  https://arxiv.org/abs/1807.00105.

\bibitem{Normaliz}
W.~Bruns, B.~Ichim, T.~R\"€omer, R.~Sieg, and C.~S\"oger, \emph{Normaliz.
  algorithms for rational cones and affine monoids}, Available at
  \text{https://www.normaliz.uni-osnabrueck.de}.

\bibitem{bruns2012power}
Winfried Bruns, Bogdan Ichim, and Christof S{\"o}ger, \emph{The power of
  pyramid decomposition in normaliz}, arXiv preprint arXiv:1206.1916 (2012).

\bibitem{DavisMachine}
Brian Davis, \emph{{P}redicting the integer decomposition property via machine
  learning}, To appear in to proceedings of the 2018 Summer Workshop on Lattice
  Polytopes at {O}saka {U}niversity, https://arxiv.org/abs/1807.08399.

\bibitem{gulliksen1970massey}
Tor~Holtedahl Gulliksen, \emph{Massey operations and the poincar{\'e} series of
  certain local rings}, Preprint series: Pure mathematics http://urn. nb.
  no/URN: NBN: no-8076 (1970).

\bibitem{HibiHigashitaniLi}
Takayuki Hibi, Akihiro Higashitani, and Nan Li, \emph{Hermite normal forms and
  {$\delta$}-vectors}, J. Combin. Theory Ser. A \textbf{119} (2012), no.~6,
  1158--1173. \MR{2915638}

\bibitem{MillerSturmfels}
Ezra Miller and Bernd Sturmfels, \emph{Combinatorial commutative algebra},
  Graduate Texts in Mathematics, vol. 227, Springer-Verlag, New York, 2005.
  \MR{2110098 (2006d:13001)}

\bibitem{Payne}
Sam Payne, \emph{Ehrhart series and lattice triangulations}, Discrete Comput.
  Geom. \textbf{40} (2008), no.~3, 365--376. \MR{MR2443289}

\bibitem{GradedSyzygies}
Irena Peeva, \emph{Graded syzygies}, Algebra and Applications, vol.~14,
  Springer-Verlag London, Ltd., London, 2011. \MR{2560561 (2011j:13015)}

\bibitem{cocalc}
Inc. SageMath, \emph{Cocalc collaborative computation online}, 2018, {\tt
  https://cocalc.com/}.

\bibitem{SolusOsaka}
Liam Solus, \emph{{L}ocal $ h^* $-polynomials of some weighted projective
  spaces}, To appear in to proceedings of the 2018 Summer Workshop on Lattice
  Polytopes at {O}saka {U}niversity, https://arxiv.org/abs/1807.08223.

\bibitem{SolusSimplices}
\bysame, \emph{Simplices for numeral systems}, Trans. Amer. Math. Soc.
  \textbf{371} (2019), no.~3, 2089--2107. \MR{3894046}

\bibitem{SAGE}
{The Sage Developers}, \emph{{S}agemath, the {S}age {M}athematics {S}oftware
  {S}ystem ({V}ersion 8.4)}, 2018, {http://www.sagemath.org}.

\end{thebibliography}

\addresseshere

\appendix
\newpage

\section{Experimental Data}\label{sec:appendix}

\begin{center}
  \begin{longtable}{|l|l|l|l|}
    \hline
  n & rpac($n$) & relprime($n$) & part($n$) \\
  \hline \hline
$1$ & $1$ & $1$ & $1$ \\
$2$ & $2$ & $2$ & $2$ \\
$3$ & $2$ & $2$ & $3$ \\
$4$ & $3$ & $4$ & $5$ \\
$5$ & $3$ & $3$ & $7$ \\
$6$ & $7$ & $10$ & $11$ \\
$7$ & $3$ & $3$ & $15$ \\
$8$ & $15$ & $21$ & $22$ \\
$9$ & $7$ & $8$ & $30$ \\
$10$ & $17$ & $22$ & $42$ \\
$11$ & $8$ & $8$ & $56$ \\
$12$ & $58$ & $76$ & $77$ \\
$13$ & $7$ & $7$ & $101$ \\
$14$ & $103$ & $134$ & $135$ \\
$15$ & $18$ & $21$ & $176$ \\
$16$ & $45$ & $56$ & $231$ \\
$17$ & $33$ & $38$ & $297$ \\
$18$ & $316$ & $384$ & $385$ \\
$19$ & $15$ & $16$ & $490$ \\
$20$ & $513$ & $626$ & $627$ \\
$21$ & $36$ & $41$ & $792$ \\
$22$ & $180$ & $215$ & $1002$ \\
$23$ & $78$ & $89$ & $1255$ \\
$24$ & $1317$ & $1574$ & $1575$ \\
$25$ & $31$ & $34$ & $1958$ \\
$26$ & $1169$ & $1414$ & $2436$ \\
$27$ & $148$ & $170$ & $3010$ \\
$28$ & $750$ & $874$ & $3718$ \\
$29$ & $143$ & $162$ & $4565$ \\
$30$ & $4779$ & $5603$ & $5604$ \\
$31$ & $26$ & $28$ & $6842$ \\
$32$ & $7050$ & $8348$ & $8349$ \\
$33$ & $392$ & $448$ & $10143$ \\
$34$ & $1675$ & $1951$ & $12310$ \\
$35$ & $478$ & $539$ & $14883$ \\
$36$ & $4850$ & $5625$ & $17977$ \\
$37$ & $115$ & $126$ & $21637$ \\
$38$ & $22109$ & $26014$ & $26015$ \\
$39$ & $816$ & $918$ & $31185$ \\
$40$ & $4410$ & $5047$ & $37338$ \\
$41$ & $433$ & $481$ & $44583$ \\
$42$ & $45819$ & $53173$ & $53174$ \\
$43$ & $104$ & $112$ & $63261$ \\
$44$ & $64731$ & $75174$ & $75175$ \\
$45$ & $1362$ & $1522$ & $89134$ \\
$46$ & $4192$ & $4747$ & $105558$ \\
$47$ & $2202$ & $2468$ & $124754$ \\
$48$ & $129242$ & $147272$ & $147273$ \\
$49$ & $365$ & $399$ & $173525$ \\
$50$ & $106948$ & $123165$ & $204226$ \\
$51$ & $1233$ & $1362$ & $239943$ \\
$52$ & $24641$ & $27874$ & $281589$ \\
$53$ & $3597$ & $3986$ & $329931$ \\
$54$ & $339300$ & $386154$ & $386155$ \\
$55$ & $623$ & $679$ & $451276$ \\
$56$ & $128590$ & $145176$ & $526823$ \\
$57$ & $3426$ & $3781$ & $614154$ \\
$58$ & $54230$ & $60927$ & $715220$ \\
$59$ & $8575$ & $9496$ & $831820$ \\
$60$ & $864231$ & $966466$ & $966467$ \\
$61$ & $302$ & $324$ & $1121505$ \\
$62$ & $1146930$ & $1300155$ & $1300156$ \\
$63$ & $13151$ & $14458$ & $1505499$ \\
$64$ & $55541$ & $61850$ & $1741630$ \\
$65$ & $16496$ & $18200$ & $2012558$ \\
$66$ & $522255$ & $586074$ & $2323520$ \\
$67$ & $1012$ & $1091$ & $2679689$ \\
$68$ & $2761384$ & $3087734$ & $3087735$ \\
$69$ & $20580$ & $22503$ & $3554345$ \\
$70$ & $234794$ & $261034$ & $4087968$ \\
$71$ & $3040$ & $3287$ & $4697205$ \\
$72$ & $4875893$ & $5392782$ & $5392783$ \\
$73$ & $2715$ & $2931$ & $6185689$ \\
\hline
\end{longtable}
\end{center}

\end{document}